\newtheorem{theorem}{Theorem}[section]
\newtheorem{proposition}[theorem]{Proposition}
\theoremstyle{definition}
\newtheorem{definition}[theorem]{Definition}
\newtheorem{example}[theorem]{Example}
\theoremstyle{remark}
\newtheorem{remark}{Remark}[section]
\numberwithin{equation}{section}
\begin{document}

\setcounter{page}{1}


\begin{center}
{\Large \textbf{$X$-convexity and Applications of Quasi-$X$-Convex Functions }}

\bigskip

\textbf{Musavvir Ali$^{a,*}$} and \textbf{Ehtesham Akhter$^b$}\\
\textbf{}  \textbf{}\\

{\small $^{a,~b}$  Department of Mathematics,\\ Aligarh Muslim University, Aligarh-202002, India}
\end{center}
\noindent
\footnote{$^*$Corresponding author\\
E-mail addresses: musavvir.alig@gmail.com (M. Ali),\\ ehteshamakhter111@gmail.com (E. Akhter). }
\bigskip

{\abstract
A class of real functions, which is the generalization of a family of convex functions, is introduced; in this connection, we have defined $X$-convex, strictly $X$-convex, quasi-$X$-convex, strictly quasi-$X$-convex, and semi-strictly quasi-$X$-convex functions. Moreover, in this paper, we give a detailed study of the fundamental properties of these functions with various examples, supporting the concepts. Finally, the study of optimization problems employs quasi-$X$-convex, semistrictly quasi-$X$-convex, and strictly quasi-$X$-convex functions.}\\

{\noindent  \bf Keywords:}  Convexity, Inequality, $X$-convex function, Quasi $X$-convex function, Monotonicity, Optimization.

\section{Introduction} Due to the vital role of convex functions in different fields of mathematics, the study of convex functions is a favourite topic for researchers. Not only in hardcore mathematics but also in the fields of Economics and Engineering, ``convexity" has many great applications. The concept of convexity on sets and functions has been a burning topic and is one of the most explored topics of research from the last many decades due to its compatibility with optimization techniques and applications in minimization problems of convexity. More than a century ago, Jensen \cite{Jensen} introduced the convex functions and then in 1949, Fenchel \cite{Fenchel  On conjugate convex functions},\cite{Fenchel Convex Cones Sets and Functions} gave a thorough study of these functions and related topics. The primary purpose of this paper is to provide a new type of generalization of convexity and to introduce members in a huge family of real functions, viz; $X$-convex, strictly $X$-convex, quasi-$X$-convex, strictly quasi-$X$-convex, semi-strictly quasi-$X$-convex functions.

On a convex subset of $R^n$, a real function $\phi$ is called a convex function, if for each $x_1, x_2 $ in the domain, the following equation holds good
\begin{equation}
\phi(q_1x_1 + q_2x_2) \leq q_1\phi(x_1) + q_2\phi(x_2),
\end{equation}
where $q_1, q_2$ are non-negative real numbers such that $q_1 + q_2 = 1$.\\

Equation (1.1) can also be explained as an arithmetic weighted mean value under a convex function $\phi$ associated with any two points of the domain which is less than or equal to the weighted mean of different values of the points under $\phi$, provided the same weights are used. 

Equation (1.1) leads to its generalization as $X$-convexity by replacing the terms on the right with a more general mean at $x_1$ and $x_2$ under the function $\phi$. Therefore, we have generated a more prominent family of functions have been generalized than the simple convex functions, and the desirable properties of convex functions have also been retained.

In the literature of convex functions, we have read many generalizations useful for the optimization (c.f., \cite{Crouzrix asiconvex functions},\cite{Greenberg}, and \cite{Mangarasian Pseudo  Convex Function}).

Recently,  Kili{\c{c}}man \cite{Almutairi} introduced the concept of $(h-m)$-convexity. Alizadeh \cite{Alizadeh} introduced the concept of $e$-convexity. Sial \cite{Sial} introduced the concept of $(\alpha,m)$-convexity and several new generalization of convexity can be seen in (\cite{Abdulaleem}, \cite{Cambini}, \cite{Du}, \cite{Kashuri}, \cite{Li}, \cite{Mahreen}, \cite{Shi}, \cite{Trean}, \cite{Yang}) and the reference cited therein.

Inspired by prior research endeavours and the significance of the notions of convexity and generalized convexity in this paper, we will be discussing the relationship between $X$-convexity with other generalizations on convexity, already established. Section 2 deal will the preliminaries on convexity and related topics. In section 3, $X$-convex, quasi-$X$-convex, strictly quasi-$X$-convex, semi-strictly quasi-$X$-convex functions will be discussed, and it is shown that there is quite a natural process to proceed for quasi-convexity, strictly quasi-convexity, semi-strictly quasi-convexity of the real functions. We have also demonstrated desirable results of $X$-convex functions, and it is derived that $X$-convex functions can be reduced to ordinary convex functions along with a simple transformation.

 \section{Preliminaries}
	
	\begin{definition}
	 Suppose that $ \phi:R^n$ $\rightarrow$ $R\cup{\{\pm}\infty\}$   is a function, an extended and real-valued. Then\\
	
	(a) $\{(r, \eta) \in R^n \times R : \phi(r) \leq \eta \}$ is a set, called as	epigraph of function $\phi$, and it is denoted by $epi(\phi)$.\\
	
	(b)  $\{(r, \eta) \in R^n \times R : \phi(r) \geq \eta \}$ is a set, called as	hypo-graph of function $\phi$, and it is denoted by $hyp(\phi)$.\\
	 
	(c)	$\{r \in R^n : \phi(r) \leq \eta \}$ is a set, called as	lower level set on function $\phi$, and it is  denoted by $L(\phi,\eta)$.\\
	
	(d) $\{r \in R^n : \phi(r) \geq \eta \}$ is a set, called as	upper level set on function $\phi$, and it is  denoted by $U(\phi,\eta)$.\\
	
\end{definition}
		
		For any two distinct points $x_1$ and $x_2$  $\in R^n$, the set $ \{t \in R^n : t=\delta x_1 + (1- \delta)x_2 ~~~  $for  all$ ~~   \delta \in   R\} $ is defined to be a line through $x_1$ and $x_2$, while the set $ [x_1 ,x_2] = \{ t\in R^n : t= \delta x_1+ (1-\delta)x_2 ~~  for  ~~ 0 \leq \delta  \leq  1\} $ is the  line segments through $x_1$ and $x_2$.
		
		\begin{definition}
			$R^n$\textsc{\char13}s non-empty convex subset $M$ is known as\\
			
		(i) {\bf convex set } $\forall$ points $ r,t \in M $ and $ \eta, \mu \geq 0 ~ such~ that~ \eta + \mu =1, $ if we have $ \eta r+\mu t \in M, $ or simply we write that $\forall$ $ r,t \in M $ and $\delta \in [0,1],$ we get $\delta r+(1-\delta) t \in M. $	
		\end{definition}
\begin{definition}
On a convex subset (not empty) $M$ of $R^n$, a function $ \phi:M \rightarrow R$ is known as\\
	
	(i) {\bf convex} if $\forall$ points  $r$ and $t \in M $, and  $ 0 \leq \delta \leq 1$, such that  $$ \phi(\delta r + (1- \delta) t) \leq \delta \phi(r) + (1- \delta) \phi(t);$$
	
   (ii) {\bf  strictly convex} if $\forall$ points  $r$ and $t \in M $, $r \neq t$ and  $ 0 < \delta < 1$, such that  $$ \phi(\delta r + (1- \delta) t) < \delta \phi(r) + (1- \delta) \phi(t);$$

 If -$\phi$ is (strictly) convex, a function $ \phi:M \rightarrow R$ is said to be (strictly) concave.
\end{definition}
	\begin{definition}
Let $ \phi:M \rightarrow R$ be defined on a convex, non-empty subset $M$ of $R^n$,  then  $ \phi$ is called as\\
	
		(i) {\bf quasi-convex} if $\forall$ points  $r$ and $t \in M $, and  $ 0 \leq \delta \leq 1$, such that  $$ \phi(\delta r + (1- \delta) t) \leq \mbox{max} \{ \phi(r),~ \phi(t)\};$$

		(ii) {\bf  strictly quasi-convex} if $\forall$ points  $r$ and $t \in M $, $r \neq t$ and  $ 0 < \delta < 1$, such that  $$ \phi(\delta r + (1- \delta) t) <   \mbox{max} \{ \phi(r),~ \phi(t)\};$$

		(iii) {\bf semi-strictly quasi-convex} if $\forall$ points  $r$ and $t \in M $, $\phi(r) \neq \phi(t)$ and  all $ 0 < \delta < 1,$    $$  \phi(\delta r + (1- \delta) t) < \mbox{max} \{\phi(r),\phi(t)\}, $$

		If $-\phi$ is (strictly, semi-strictly) quasi-convex, a function $ \phi : M  \rightarrow R $ is said to be (strictly, semi-strictly) quasi-concave.	
		\end{definition}  
		\section{$X$-convexity}	
		
				The notion of convexity  is intensely studied and, its various generalizations have been explored by many researchers (\cite{Bertsekas Convex Analysis}-\cite{Rockafellar}). In this section, we have given some new generalizations of the convexity of sets and functions. Different definitions are analyzed, and the concepts are verified through the examples.
		\begin{definition} 
	An $X$-convex set is a non-empty subset $M$ of  $R^n$, if $\forall$ elements $r,~t$ of set  $M $ and $ 0 \leq \delta \leq 1,$ $\exists$ a vector valued map $ g : M  \rightarrow R^n $    such that,  we get   $\delta (r-t)+g(t)  \in M.$ 
          
			\end{definition}
		\begin{remark}

		 In general, every convex set is also an $X$-convex set, however converse isn\textsc{\char13}t always true.
		\begin{example}
			
			The set $ [1,2] \cup [3, \infty) $ is not convex set but with map $ g :  [1,2] \cup [3, \infty) \rightarrow R $, given by $ g(t)= t+3,$ it is an $X$-convex. 
		\end{example}
	\end{remark}
\begin{definition}
$ \phi : M \rightarrow   R$, defined on an $X$ - convex, non-empty set $M \subseteq R^n$, is

(a) {\bf $X$-convex} if $\forall$ $ r ,t \in M$ and  $0 \leq \delta \leq 1, $  $\exists$ a vector valued map $ g : M \rightarrow R^n $ such that,  $$ \phi(\delta (r-t) + g(t)) \leq \delta \phi(r) + (1- \delta) \phi(t) , $$

(b) {\bf strictly $X$-convex} if $\forall$ $ r ,t \in M$, $r \neq t$ and  $0<\delta < 1, $  $\exists$ a vector valued map  $ g : M \rightarrow R^n $ such that,     $$ \phi(\delta (r-t) + g(t)) < \delta \phi(t) + (1- \delta) \phi(t), $$

(c) $ \phi : M  \rightarrow R $ is (strictly) an $X$-concave function in case $-\phi$ is (strictly) $X$-convex.

		\begin{remark}
		Every convex function is an $X$-convex function by definition (2.3(a)), however the converse may not be true.
	 
		\end{remark}	
		\begin{example}
We take a constant function $ \phi :  [1,2] \cup [3, \infty)  \rightarrow R$ defined as  $\phi(r)=c$ on a set  $[1,2] \cup [3, \infty)$,  where c is a constant, and let the function $ g :  [1,2] \cup [3, \infty)  \rightarrow R $ be defined as: $$ g(t)=t+3. $$
	
	Then, $\phi$ is not a convex function as its domain is not a convex set. But $\phi$ is an $X$-convex function with respect to map $g$.
		\end{example}
	
	\begin{example}
	
	We take an identity function $ \phi$ on $R$ and let a map $ g : R \rightarrow R $ be defined as: $$g(t)=t-\alpha,$$ where $\alpha (> 0) $, a constant.\\The function $\phi $ is strictly $X$-convex with respect to map $g$. 
	\end{example}	
		\end{definition}
	
	\begin{definition}
	$ \phi : M \rightarrow   R$, defined on an $X$ - convex, non-empty set $M \subseteq R^n$, is
						
				(a) {\bf quasi-$X$-convex} if $\forall$ $ r ,t \in M$ and  $0 \leq \delta \leq 1, $  $\exists$ a vector valued map $ g : M \rightarrow R^n $ such that,
				  $$ \phi(\delta (r-t) + g(t)) \leq \mbox{max} \{\phi(r),\phi(t)\}, $$ 
				
				(b) {\bf strictly quasi-$X$-convex} if $\forall$ $ r ,t \in M$, $r\neq t $ and  $0 < \delta < 1, $  $\exists$ a vector valued map $ g : M \rightarrow R^n $ such that,    $$ \phi(\delta (r-t) + g(t)) < \mbox{max} \{\phi(r),\phi(t)\}, $$
				
				(c) {\bf semi-strictly quasi-$X$-convex} if $\forall$  $ r ,t \in M$ and  $0 < \delta < 1, $  $\exists$ a vector valued map $ g : M \rightarrow R^n $ such that, $ \phi(r) \neq \phi(t) $     $$\phi((\delta (r-t) + g(t)) < \mbox{max} \{\phi(r),\phi(t)\}$$

				If $-\phi$ is (strictly, semi-strictly) quasi-$X$-convex, $ \phi : M  \rightarrow R $ is said to be (strictly, semi-strictly) quasi-$X$-concave.
			\end{definition} 
		
		\begin{example}
		
			Suppose $ \phi :  (-\infty, -3 ] \cup [-2, -1]  \rightarrow R $ be given by: $$ \phi(r)=\alpha+[r],$$ \\where $ [ , ] $ is the greatest integers function, $\alpha$ is any constant number and let the function $ g :  (-\infty, -3 ] \cup [-2, -1]  \rightarrow R $ be given by: $$ g(t)=t- 3. $$ Then $\phi$ is a quasi-$X$-convex with respect to map $g$.
		\end{example} 
		\begin{remark}
		It is evident that the family of (strictly) $X$-convex functions implies the family of (strictly) quasi-$X$-convex but the converse need not be true in general.
		\end{remark}
	
	\begin{example}
	
	Let $ \phi :  (-\infty, \frac{-1}{50} ] \cup [\frac{-1}{100}, 0]  \rightarrow R $ be defined as: $$ f(r)=\alpha+[r],$$ where $ [ , ] $ denotes the greatest integer function, $\alpha $ is any constant number and let $ g :  (-\infty, \frac{-1}{50} ] \cup [\frac{-1}{100}, 0]  \rightarrow R $ be a real valued function defined by: $$ g(t)=t- \frac{1}{50}. $$ The function $\phi$ is quasi-$X$-convex with respect to $g$. However, it is not an $X$-convex function with respect to map $g$ as the choice $ \delta=0.502, r=-1.5~ \mbox{and}~ t=-2.5 $, yields $$ \phi(\delta (r-t) + g(t)) > \delta \phi(r) + (1- \delta) \phi(t). $$
	\end{example}
\begin{example}
	Suppose a function $\phi :  [-1, \frac{-1}{2} ] \cup [0, \infty)  \rightarrow R $ is given by: $$ \phi(r)= \begin{cases}

3, \text{ if } r=0 \\
2, \text{ if } r\neq 0 
	\end{cases},$$  \\ and let the function $ g :  [-1, \frac{-1}{2} ] \cup [0, \infty)  \rightarrow R $ be given by: $$ g(t)=t+ 1. $$

So, $\phi$ is semi-strictly quasi-$X$-convex with respect to map $g$, but neither strictly quasi-$X$-convex nor quasi-$X$-convex.

\end{example}
	\begin{remark}
An $X$-convex function is semi-strictly quasi-$X$-convex function, but the converse may not generally hold.
	\end{remark}

\begin{example}
	Let $ \phi :  [0, 2 ] \cup [5, \infty)  \rightarrow R $ be a function given by: $$ \phi(r)= \begin{cases}
	
	2, \text{ if } r=0 \\
	1, \text{ if } r\neq 0 
\end{cases},$$  \\ and let  $ g :  [0, 2 ] \cup [5, \infty)  \rightarrow R $ be a function defined as: $$ g(t)=t+ 5. $$ So, $\phi$ is semi-strictly quasi-$X$-convex but neither strictly quasi-$X$-convex nor $X$-convex function. 

\end{example}
\begin{remark}

It is evident from the definition that a strictly quasi-$X$-convex function is a quasi-$X$-convex function. However, the converse may not generally hold.
\end{remark}
\section{Some Properties of $X$-convex Functions}
\begin{proposition}

The epigraph of the $X$-convex function $ \phi : M \rightarrow R $ is an $X$-convex set, where $M$ is any non-empty $X$-convex subset  of $R^n$.  
\begin{proof}
	Consider $\phi$ be any $X$-convex function defined on a non-empty subset $M$ of $R^n$, and let $(r, \eta), (t, \mu) \in epi(\phi), $ then
	\begin{equation}
	 \phi(r) \leq \eta  \text{ and } \phi(t) \leq \mu.	
	\end{equation} 
	since $\phi$ is an $X$-convex function, then  $\forall$ $r,t \in M,$  $0 \leq \delta \leq 1,$  $\exists$ $ g : M \rightarrow R^n $, a vector valued map, with 
	$$ \phi(\delta (r-t) + g(t)) \leq \delta \phi(r) + (1- \delta) \phi(t) , $$
	$$\phi(\delta (r-t) + g(t)) \leq \delta \eta + (1-\delta) \mu , $$
	$$ (\delta (r-t) + g(t)) ,~\delta \eta + (1- \delta) \mu) \in epi(\phi),$$
	or	$$ (\delta (r-t) + g(t)) ,~ \alpha) \in epi(\phi),$$
		where $\delta \eta + (1- \delta) \mu = \alpha. $
		
		This implies that $M$ is an $X$-convex set.
		\end{proof}
		\end{proposition}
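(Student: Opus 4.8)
The plan is to verify the defining property of an $X$-convex set directly for $epi(\phi)$, which lives in $R^n \times R \cong R^{n+1}$. According to Definition 3.1, I must exhibit a vector-valued map $G : epi(\phi) \to R^{n+1}$ so that for any two points $(r,\eta),(t,\mu) \in epi(\phi)$ and any $\delta \in [0,1]$, the combination $\delta\big((r,\eta)-(t,\mu)\big) + G\big((t,\mu)\big)$ again lies in $epi(\phi)$. Note that the ambient space for the map is $R^{n+1}$, not $R^n$, since the set under consideration sits one dimension higher than $M$.

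First I would fix $(r,\eta),(t,\mu) \in epi(\phi)$, so that $\phi(r) \le \eta$ and $\phi(t) \le \mu$, and invoke the $X$-convexity of $\phi$ (hypothesis, together with the $X$-convexity of $M$) to produce the map $g : M \to R^n$ satisfying $\phi\big(\delta(r-t)+g(t)\big) \le \delta\phi(r)+(1-\delta)\phi(t)$. The crucial construction is then to define $G : epi(\phi) \to R^{n+1}$ by $G\big((t,\mu)\big) = (g(t),\mu)$, i.e.\ to lift the base map $g$ to the epigraph while leaving the height coordinate unchanged.

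Next I would compute the displaced point. A short substitution gives
$$\delta\big((r,\eta)-(t,\mu)\big) + G\big((t,\mu)\big) = \big(\delta(r-t)+g(t),\ \delta\eta+(1-\delta)\mu\big),$$
so the first coordinate is exactly the argument appearing in the $X$-convexity inequality, while the second coordinate collapses to the convex combination $\delta\eta+(1-\delta)\mu$. Chaining $\phi\big(\delta(r-t)+g(t)\big) \le \delta\phi(r)+(1-\delta)\phi(t) \le \delta\eta+(1-\delta)\mu$ then shows this point satisfies the membership condition for $epi(\phi)$, which closes the argument.

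The only delicate point is the choice of the height coordinate of $G$: one has to check that setting the second component of $G\big((t,\mu)\big)$ equal to $\mu$ is precisely what forces the second coordinate of the displacement to reduce to the clean convex combination $\delta\eta+(1-\delta)\mu$, rather than to some expression entangling $g$; everything else is routine. I would also be careful to state the conclusion as ``$epi(\phi)$ is an $X$-convex set'' (the object whose $X$-convexity is actually being established), since $M$ is already $X$-convex by hypothesis and is not what the proposition asserts.
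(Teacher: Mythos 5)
Your proposal is correct and rests on the same core computation as the paper's proof: chaining the $X$-convexity inequality $\phi(\delta(r-t)+g(t)) \le \delta\phi(r)+(1-\delta)\phi(t)$ with the epigraph inequalities $\phi(r)\le\eta$, $\phi(t)\le\mu$ to conclude that the point $\bigl(\delta(r-t)+g(t),\ \delta\eta+(1-\delta)\mu\bigr)$ lies in $epi(\phi)$. Where you go beyond the paper is in making the argument actually conform to Definition 3.1: the paper never exhibits the vector-valued map on $epi(\phi)\subseteq R^{n+1}$ whose existence the definition of an $X$-convex set demands, whereas you explicitly construct the lift $G\bigl((t,\mu)\bigr)=(g(t),\mu)$ and verify that the displaced point it produces is exactly the one appearing in the inequality chain. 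This is the step that turns the paper's verification of a membership statement into a genuine proof of $X$-convexity of the epigraph, and your observation about why the height coordinate of $G$ must be $\mu$ (so that $\delta(\eta-\mu)+\mu$ collapses to $\delta\eta+(1-\delta)\mu$) is precisely the bookkeeping the paper leaves implicit. You also state the correct conclusion, ``$epi(\phi)$ is an $X$-convex set,'' where the paper's final line erroneously asserts that $M$ is $X$-convex --- $M$ was $X$-convex by hypothesis and is not the object of the proposition.
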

			
		\begin{theorem}
		
		Let  $ \phi : M \rightarrow R $ be defined on a subset $M$ of $R^n$, also $M$ is chosen to be non-empty $X$-convex set . Let $I$ be a $X$-convex set in $R$ that contains $\phi(M)$. If $ \theta : I \rightarrow R $ is an increasing convex function, then $ \theta \circ \phi $ is an $X$-convex function on $M$.
		 \begin{proof}
		 Since $\phi$ is an $X$-convex function, then  $\forall$ $r,t \in M,$  $0 \leq \delta \leq 1,$  $\exists$ a vector valued function  $ g : M \rightarrow R^n $ such that
		 $$ \phi(\delta (r-t) + g(t)) \leq \delta \phi(r) + (1- \delta) \phi(t) , $$
		 and $$ \theta \circ(\phi(\delta (r-t) + g(t)) \leq  \theta(\delta \phi(r) + (1- \delta) \phi(t)),$$
		  $$ \theta \circ(\phi(\delta (r-t) + g(t)) \leq  \delta  \theta \circ \phi(r) + (1- \delta) \theta \circ \phi(t). $$
		 This implies that $\theta \circ \phi $ is an $X$-convex function on $M$.
		\end{proof}	  
		\end{theorem}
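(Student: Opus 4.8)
The plan is to verify the defining inequality of $X$-convexity for $\theta\circ\phi$ directly, reusing the very same witness map $g$ that already certifies the $X$-convexity of $\phi$. First I would fix arbitrary points $r,t\in M$ and a scalar $\delta\in[0,1]$, and invoke the hypothesis that $\phi$ is $X$-convex to produce a vector-valued map $g:M\to R^n$ with
$$\phi(\delta(r-t)+g(t))\leq \delta\phi(r)+(1-\delta)\phi(t).$$

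Next I would push both sides through $\theta$. Since $\theta$ is increasing, hence order-preserving, the inequality survives, giving
$$(\theta\circ\phi)(\delta(r-t)+g(t))=\theta\bigl(\phi(\delta(r-t)+g(t))\bigr)\leq\theta\bigl(\delta\phi(r)+(1-\delta)\phi(t)\bigr).$$
I would then apply the convexity of $\theta$ to the right-hand argument, which is a convex combination of the two values $\phi(r)$ and $\phi(t)$, to obtain
$$\theta\bigl(\delta\phi(r)+(1-\delta)\phi(t)\bigr)\leq\delta\,\theta(\phi(r))+(1-\delta)\,\theta(\phi(t)).$$
Chaining these two estimates produces precisely the $X$-convexity inequality for $\theta\circ\phi$, with the same $g$ serving as witness, which completes the argument. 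Thus the whole proof is a short ``monotonicity-then-convexity'' sandwich, and no separate verification that $M$ is $X$-convex is needed beyond the hypothesis.

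The step I expect to be the genuine obstacle — and the one a careful write-up must justify — is the very first use of $\theta$, namely evaluating it at the point $\delta\phi(r)+(1-\delta)\phi(t)$ and applying its convexity there. From $\phi(M)\subseteq I$ we immediately get $\phi(r),\phi(t)\in I$, but the convexity inequality for $\theta$ requires the entire segment joining $\phi(r)$ and $\phi(t)$ to lie inside the domain $I$. The hypothesis only states that $I$ is $X$-convex, and an $X$-convex subset of $R$ (for example $[1,2]\cup[3,\infty)$ as in Example~3.3) need not contain such segments, so the intermediate point could fall in a gap where $\theta$ is undefined. I would therefore resolve this either by strengthening the hypothesis so that $I$ is an ordinary convex interval, or by restricting attention to those $\delta$ for which $\delta\phi(r)+(1-\delta)\phi(t)$ remains in $I$; once this domain issue is settled, the order-preserving and Jensen-type steps above go through verbatim.
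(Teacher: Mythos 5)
Your proof takes exactly the same route as the paper's: apply the $X$-convexity inequality for $\phi$, push it through $\theta$ using monotonicity, then split the right-hand side using convexity of $\theta$, with the same map $g$ as witness. The domain issue you flag is real and is in fact a defect in the paper's own hypothesis --- since an $X$-convex subset of $R$ (such as $[1,2]\cup[3,\infty)$) need not contain the segment joining $\phi(r)$ and $\phi(t)$, the point $\delta\phi(r)+(1-\delta)\phi(t)$ may lie outside $I$, so the paper's proof silently assumes what your proposed strengthening (take $I$ to be an ordinary convex set, i.e.\ an interval) would make rigorous.
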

	\begin{theorem}
		For $M$, to be a non-empty $X$-convex subset of $R^n$, if that
		\\(a) $ \phi_1,~\phi_2 : M \rightarrow R $ be two $X$-convex functions with condition is same vector valued map $g$ associated with $X$-convexity of $\phi_1$ and $\phi_2$, then $ \phi_1 + \phi_2 $ also an $X$-convex function on $M$. 
		\\(b) $ \phi : M \rightarrow R $ is an $X$-convex function on $M,$ then for $\alpha \geq 0$, $\alpha \phi $ is also an $X$- convex function on $M$.\\
		(c) $\phi_i ,~~ i = 1, 2,.......,n $ are $X$-convex functions  with the condition that same vector valued map $g$ associated with $X$-convexity of all $\phi_i$, and $c_i \geq 0, $ then $\sum^{n}_{i=1} ~ {c_i\phi_i} $ is also an $X$-convex function on $M$.
		 \begin{proof}
 (a)  Let $\phi_1,\phi_2$ be $X$-convex function on $M$, then  $\forall$ $ r ,t \in M$ and  $0 \leq \delta \leq 1, $  $\exists$  vector valued maps $ g : M \rightarrow R^n $ with respect to $\phi_1, \phi_2 $ respectively, satisfying 
	
	\begin{equation}
	\phi_1(\delta (r-t) + g(t)) \leq \delta \phi_1(r) + (1- \delta) \phi_1(t) ,
	\end{equation}	
	and
 \begin{equation}
 \phi_2(\delta (r-t) + g(t)) \leq \delta \phi_2(r) + (1- \delta) \phi_2(t) ,
 \end{equation}  
 so, adding (4.2) and (4.3), we get
 	$$ \phi_1(\delta (r-t) + g(t)) + \phi_2(\delta (r-t) + g(t))  \leq \delta \phi_1(r) + (1- \delta) \phi_1(t) + \delta \phi_2(r) + (1- \delta) \phi_2(t), $$	
		$$( \phi_1 + \phi_2)(\delta (r-t) + g(t))  \leq \delta (\phi_1+\phi_2)(r) + (1- \delta) (\phi_1+\phi_2)(t). $$
		This implies that $\phi_1+\phi_2$ is $X$- convex function on $M$.
	\\	(b)  Suppose $\phi$ is $X$-convex function on $M$, then  $\forall$ $ r ,t \in M$ and  $0 \leq \delta \leq 1, $  $\exists$ a vector valued map $ g : M \rightarrow R^n, $ s.t., 
	 	$$ \phi(\delta (r-t) + g(t)) \leq \delta \phi(r) + (1- \delta) \phi(t),$$
	multiplying by $\alpha (\geq 0) $ in above equation, yields
	$$ \alpha (\phi(\delta (r-t) + g(t))) \leq \alpha( \delta \phi(r) + (1- \delta) \phi(t)),$$  
		$$ \alpha( \phi(\delta (r-t) + g(t))) \leq \delta(\alpha \phi(r)) + (1- \delta) (\alpha \phi(t)).$$
		This confirms $X$- convexity of $\alpha$$\phi$ on $M$.
		
		(c)  Suppose $\phi_i, i=1, 2,.....,n $ are $X$-convex function on $M$,  then  $\forall$ $ r ,t \in M$ and  $0 \leq \delta \leq 1, $  $\exists$  vector valued maps $ g : M \rightarrow R^n $ such that,  
		
			$$ \phi_i(\delta (r-t) + g(t)) \leq \delta \phi_i(r) + (1- \delta) \phi_i(t).$$
		It follows that $$ c_i \phi_i(\delta (r-t) + g(t)) \leq \delta c_i \phi_i(r) + (1- \delta) c_i \phi_i(t) $$ and 	$$(\sum^{n}_{i=1} ~ {c_i \phi_i} )(\delta (r-t) + g(t)) \leq \delta (\sum^{n}_{i=0} ~ {c_i \phi_i})(r) + (1- \delta) (\sum^{n}_{i=0} ~ {c_i \phi_i})(t).$$
	\end{proof}		
	\end{theorem}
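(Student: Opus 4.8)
The plan is to exploit the hypothesis --- explicitly built into parts (a) and (c) --- that a \emph{single} vector-valued map $g$ witnesses the $X$-convexity of all the functions being combined. This common $g$ is exactly what keeps the argument of the composite function on the left-hand side identical across the separate inequalities, so that they may be added (or scaled and summed) termwise without the arguments drifting apart. I would state this shared-$g$ convention at the outset, since it is the only nontrivial ingredient.

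For part (a), I would write the defining inequality of $X$-convexity for $\phi_1$ and for $\phi_2$ using the \emph{same} map $g$: for all $r,t\in M$ and $0\le\delta\le1$,
$$\phi_1(\delta(r-t)+g(t)) \le \delta\phi_1(r)+(1-\delta)\phi_1(t),$$
$$\phi_2(\delta(r-t)+g(t)) \le \delta\phi_2(r)+(1-\delta)\phi_2(t).$$
Adding these and regrouping yields $(\phi_1+\phi_2)(\delta(r-t)+g(t)) \le \delta(\phi_1+\phi_2)(r)+(1-\delta)(\phi_1+\phi_2)(t)$, which is precisely the $X$-convexity inequality for $\phi_1+\phi_2$ with the same map $g$.

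For part (b), I would take the single defining inequality for $\phi$ and multiply through by $\alpha\ge0$. Because $\alpha$ is nonnegative the inequality direction is preserved, and distributing $\alpha$ gives $\alpha\phi(\delta(r-t)+g(t)) \le \delta(\alpha\phi)(r)+(1-\delta)(\alpha\phi)(t)$, so $\alpha\phi$ is $X$-convex with the same $g$; here nonnegativity of $\alpha$ is essential, since a negative factor would reverse the inequality and produce $X$-concavity instead. For part (c), I would combine (a) and (b): using the common $g$ associated with every $\phi_i$, multiply the defining inequality of each $\phi_i$ by $c_i\ge0$ and sum over $i=1,\dots,n$, or equivalently induct on $n$. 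This collapses to $\big(\sum_{i=1}^n c_i\phi_i\big)(\delta(r-t)+g(t)) \le \delta\big(\sum_{i=1}^n c_i\phi_i\big)(r)+(1-\delta)\big(\sum_{i=1}^n c_i\phi_i\big)(t)$.

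The computations themselves are routine; the genuine subtlety --- and the reason the hypotheses are phrased as they are --- is the quantifier structure of $X$-convexity. Since $g$ is existentially quantified in the definition, two arbitrary $X$-convex functions need not share a witness, and with distinct maps $g_1,g_2$ the left-hand arguments $\delta(r-t)+g_1(t)$ and $\delta(r-t)+g_2(t)$ would differ, so the inequalities could not be added. The common-$g$ assumption is thus the crux of the result rather than a cosmetic convenience, and I expect the main point to be recognizing and justifying why that assumption is really needed, rather than any calculation.
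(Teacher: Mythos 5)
Your proposal is correct and follows essentially the same route as the paper's own proof: write the defining inequality for each function with the shared map $g$, then add (for (a)), scale by $\alpha\ge 0$ (for (b)), and scale-and-sum (for (c)). Your explicit remark that the existentially quantified $g$ must be common to all the functions—otherwise the left-hand arguments would differ and the inequalities could not be combined—is a point the paper's proof glosses over (its phrasing ``maps $g$ with respect to $\phi_1,\phi_2$ respectively'' is ambiguous on exactly this issue), so your write-up is, if anything, cleaner.
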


\begin{theorem}

	$ \phi : M \rightarrow R $ be a function defined on a non-empty $X$-convex subset $M$ of $R^n$, then its lower level set is also an $X$-convex set.
	
	\begin{proof}
	
	Definition of $X$-convexity of the function $\phi$ says that $\forall$ $ r ,t \in M$ and  $0 \leq \delta \leq 1, $  $\exists$ a vector valued map $ g : M \rightarrow R^n,$ s.t., 	$$ \phi(\delta (r-t) + g(t)) \leq \delta \phi(r) + (1- \delta) \phi(t),$$
	
	and let $ r,t \in L(\phi,\eta),$ the lower level set. Then $$ \phi(r) \leq \eta ~\mbox{and}~ \phi(t) \leq \eta,$$ 
	from, 	$$ \phi(\delta (r-t) + g(t)) \leq \delta \eta + (1- \delta) \eta,$$
	$$ \phi(\delta (r-t) + g(t)) \leq\eta.	$$
	This implies that 	$$ (\delta (r-t) + g(t)) \in L(\phi,\eta).$$
	Therefore, the lower level set of $\phi$ is an $X$-convex set.	
\end{proof}
\end{theorem}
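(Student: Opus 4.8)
The plan is to mirror the classical fact that lower level sets of convex functions are convex, carrying it through the $X$-convex apparatus of the preceding section. Throughout I take $\phi$ to be $X$-convex (the hypothesis the proof actually needs) and I fix the vector valued map $g : M \to R^n$ witnessing this $X$-convexity. The map exhibiting $X$-convexity of the level set $L(\phi,\eta)$ will be this very same $g$ restricted to $L(\phi,\eta)$, so no fresh construction is required.

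First I would pick arbitrary points $r, t \in L(\phi,\eta)$ and a scalar $\delta \in [0,1]$. By the definition of the lower level set this gives $\phi(r) \le \eta$ and $\phi(t) \le \eta$. Since $r, t \in M$, the defining inequality of $X$-convexity applies and yields $\phi(\delta(r-t)+g(t)) \le \delta\phi(r) + (1-\delta)\phi(t)$; the left-hand side is meaningful precisely because the combination $\delta(r-t)+g(t)$ lies in $M$, the domain of $\phi$.

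Next I would estimate the right-hand side. As $\delta \ge 0$ and $1-\delta \ge 0$ with $\delta + (1-\delta) = 1$, the two bounds $\phi(r) \le \eta$ and $\phi(t) \le \eta$ combine to give $\delta\phi(r) + (1-\delta)\phi(t) \le \delta\eta + (1-\delta)\eta = \eta$. Chaining this with the $X$-convexity inequality produces $\phi(\delta(r-t)+g(t)) \le \eta$, which is exactly the assertion $\delta(r-t)+g(t) \in L(\phi,\eta)$. Since $r$, $t$ and $\delta$ were arbitrary, this establishes that $L(\phi,\eta)$ is an $X$-convex set with the map $g$.

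The computation is essentially a single chain of inequalities, so I expect no serious analytic obstacle. The only point meriting care is conceptual rather than technical: one must check that the single map $g$ attached to the function $\phi$ simultaneously plays the role of the set-map demanded by the definition of an $X$-convex set, and that the resulting point $\delta(r-t)+g(t)$ truly lands in the domain $M$, which is guaranteed the instant $\phi$ is evaluated there in the $X$-convexity hypothesis. Recording this dependence on $g$ explicitly, rather than leaving it tacit as in a direct transcription of the convex case, is the main thing I would be careful to state.
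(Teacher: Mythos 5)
Your proof is correct and takes essentially the same route as the paper's: apply the $X$-convexity inequality of $\phi$ to points $r,t \in L(\phi,\eta)$, bound the right-hand side by $\delta\eta + (1-\delta)\eta = \eta$, and conclude $\delta(r-t)+g(t) \in L(\phi,\eta)$. Your explicit remarks that the same map $g$ serves as the witness for the $X$-convexity of the level set and that $\delta(r-t)+g(t)$ lies in $M$ are points the paper leaves tacit, and making them is a modest improvement rather than a divergence.
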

\begin{theorem}

Suppose $M$ be an $X$-convex subset of $\mathbb{R}^n$ with condition $\|\delta (s-r) + g(r)- r\| < \nu$ for each $ s, r \in M$, $\delta \in [0, 1]$, where $ g : M \rightarrow \mathbb{R}^n $ is a vector valued map associated with $X$-convexity and $\nu>0$. Then, every local minimum of a (strictly) $X$-convex function is a  global minimum (unique) of $ \phi$ over $M$.\\
 An $X$-convex set is also a collection of points on set $M$ where an $X$-convex function reaches its global minimum.  
\begin{proof} Suppose $\phi$ be an  $X$-convex function defined on a non-empty subset $M$ of $R^n$, attains its local minimum at $r \in M$. Then $\exists$ $\nu > 0 $, such that 
\begin{equation}
	\phi(r) \leq \phi(t), \text{ for all } t \in M \cap B _{\nu}(r)
	\end{equation}
Contrarily suppose $r \in M$, is not a global minimum of $\phi$ over $M$. Therefore, $\exists$ $ s (\neq r) \in M$,  such that, $ \phi(s) < \phi(r).$ Due to definition of $X$-convexity of $\phi$, we write 	$$ \phi(\delta (s-r) + g(r)) \leq \delta \phi(s) + (1- \delta) \phi(r),$$
$$\phi(\delta (s-r) + g(r)) < \delta \phi(r) + (1- \delta)\phi(r),$$
$$ \phi(\delta (s-r) + g(r)) <  \phi(r).$$
We get the contradiction of inequality (4.4), since $\|\delta (s-r) + g(r)- r\| < \nu$ for each $ s, r \in M$, $\delta \in [0, 1]$ then, $\delta (s-r) + g(r) \in M \cap B _{\nu}(r)$ for any $\delta$.

Now it's remain to show that $r$  is a unique minimum of $\phi$, if $\phi$ is strictly $X$-convex function. We suppose (on contrary), that $\exists$ $ r_{0}(\neq r)$ such that $r_{0} $ is also a global minimum of $\phi$ over the same set $M$, so we have $\phi(r)=\phi(r_0)$. Strictly $X$-convexity of $\phi$ for $0< \delta < 1$, gives

	$$ \phi(\delta (r_{0}-r) + g(r)) < \delta \phi(r_{0}) + (1- \delta) \phi(r),$$
	$$ \phi(\delta (r_{0}-r) + g(r)) < \delta \phi(r) + (1- \delta) \phi(r),$$
		$$ \phi(\delta (r_{0}-r) + g(r)) <  \phi(r).$$
	
		This is in direct conflict with the fact that $r$ is a global minimum of $\phi$ over $M$.
     	\\ 	Suppose we have a set $ A = \{r \in M : \phi(r) \leq \phi(t) $ for all $ t \in M\}$ at which $\phi$ attains its global minimum points. If $ r_{1}, r_{2} \in A$, then $\phi(r_{1}) \leq \phi(t)$ and $\phi(r_{2}) \leq \phi(t) $ for each $ t \in M. $  The definition of $X$-convexity of function $\phi$, leads to 
     	
     		\begin{equation}
     	\phi(\delta (r_{2}-r_{1}) + g(r_{1})) \leq \delta \phi(r_{2}) + (1- \delta) \phi(r_{1}),
     		\end{equation}
     	For $0\leq \delta \leq 1$ equation (4.5), in preview of global minimum at $A$ implies
     	
     		$$ \phi(\delta (r_{2}-r_{1}) + g(r_{1})) \leq \delta \phi(t) + (1- \delta) \phi(t),$$
     		$$ \phi(\delta (r_{2}-r_{1}) + g(r_{1})) \leq \phi(t),$$
     		which implies that $\delta(r_{2}-r_{1}) + g(r_{1}) \in A.$ Hence, $A$ is $X$-convex. 
	\end{proof}
\end{theorem}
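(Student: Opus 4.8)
The plan is to split the statement into three parts---(i) a local minimizer is global, (ii) the global minimizer is unique when $\phi$ is strictly $X$-convex, (iii) the set of global minimizers is $X$-convex---and to settle each by a short contradiction or direct computation driven by the defining inequality of $(X$-$)$convexity. First I would fix a point $r\in M$ at which $\phi$ attains a local minimum, so that by hypothesis there is a radius $\nu>0$ with $\phi(r)\le\phi(t)$ for every $t\in M\cap B_\nu(r)$, which is exactly inequality (4.4). To prove $r$ is in fact global, I would argue by contradiction: suppose some $s\in M$ with $s\neq r$ satisfies $\phi(s)<\phi(r)$. Applying the $X$-convexity inequality to the pair $(s,r)$ gives $\phi(\delta(s-r)+g(r))\le\delta\phi(s)+(1-\delta)\phi(r)$, and because $\phi(s)<\phi(r)$ the right-hand side is strictly below $\phi(r)$ for every $\delta\in(0,1]$, so $\phi(\delta(s-r)+g(r))<\phi(r)$.

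The crucial move---and the step I expect to be the main obstacle---is to verify that the test point $\delta(s-r)+g(r)$ actually lies in $M\cap B_\nu(r)$, so that the displayed strict inequality truly contradicts (4.4). Membership in $M$ is immediate from the definition of an $X$-convex set applied to $s,r\in M$. Membership in the ball is precisely what the standing hypothesis $\|\delta(s-r)+g(r)-r\|<\nu$ supplies, provided we take the local-minimality radius no larger than the $\nu$ furnished by that hypothesis; shrinking $\nu$ if necessary, the two radii may be assumed equal. Once the point is placed inside $M\cap B_\nu(r)$, the strict inequality $\phi(\delta(s-r)+g(r))<\phi(r)$ contradicts (4.4), forcing $\phi(s)\ge\phi(r)$ for all $s\in M$, i.e. $r$ is a global minimizer. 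I would flag here that the role of the norm condition is exactly to guarantee that the $X$-convex combination does not ``escape'' the neighbourhood, a complication absent in the ordinary convex setting where the segment automatically stays near its endpoints.

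For uniqueness under strict $X$-convexity I would again argue by contradiction: if $r_0\neq r$ were a second global minimizer then $\phi(r_0)=\phi(r)$, and strict $X$-convexity applied to $(r_0,r)$ with $0<\delta<1$ yields $\phi(\delta(r_0-r)+g(r))<\delta\phi(r_0)+(1-\delta)\phi(r)=\phi(r)$, contradicting that $r$ is a global minimum. Finally, to show the minimizer set $A=\{r\in M:\phi(r)\le\phi(t)\ \text{for all }t\in M\}$ is $X$-convex, I would take $r_1,r_2\in A$, observe that $\phi(r_1)=\phi(r_2)$ equals the common minimal value, and apply the $X$-convexity inequality to the pair $(r_2,r_1)$, giving $\phi(\delta(r_2-r_1)+g(r_1))\le\delta\phi(r_2)+(1-\delta)\phi(r_1)$; the right-hand side is just the minimal value, so $\phi(\delta(r_2-r_1)+g(r_1))\le\phi(t)$ for all $t\in M$, whence $\delta(r_2-r_1)+g(r_1)\in A$ for every $\delta\in[0,1]$. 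This is precisely the defining property of an $X$-convex set, completing the argument.
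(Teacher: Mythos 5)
Your proposal follows the paper's proof essentially step for step: the same contradiction argument for local-implies-global, using the $X$-convexity inequality together with the norm hypothesis to place $\delta(s-r)+g(r)$ in $M\cap B_\nu(r)$; the same strict-inequality contradiction for uniqueness under strict $X$-convexity; and the same direct computation showing the minimizer set $A$ is $X$-convex. The one point where you go beyond the paper---reconciling the local-minimality radius with the $\nu$ of the hypothesis, which the paper silently identifies by using the same symbol---your patch points the wrong way: the hypothesis bounds $\|\delta(s-r)+g(r)-r\|$ only by the \emph{fixed} $\nu$, so you need the local-minimality radius to be \emph{at least} $\nu$ (not ``no larger than'' it), and since a local-minimality radius can only be shrunk, not enlarged, the imprecision---which is inherited from the theorem statement itself and present in the paper's own proof---is not actually repaired by your remark.
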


\begin{theorem}
	 For an $X$-convex non-empty subset $M$ of $R^n$, a function $ \phi : M \rightarrow R $ is quasi-$X$-convex iff $ L(\phi, \eta), $ the lower level sets are $X$-convex $\forall$ $\eta \in R.$ 
	\begin{proof}
	
	 Suppose $\phi$ be a quasi-$X$-convex function and for $\eta \in R, $ let $ r,t \in L(\phi,\eta). $ Then $ \phi(r) \leq \eta$ and $ \phi(t) \leq \eta. $ Due to quasi-$X$-convexity of $\phi$, we can write for $0 \leq \delta \leq 1$
	 
	$$ \phi(\delta (r-t) + g(t)) \leq \mbox{max}\{\phi(r) , \phi(t)\}, $$ 
		$$ \phi(\delta (r-t) + g(t)) \leq \eta, $$
		i.e.,  $\delta (r-t) + g(t) \in L(\phi,\eta)$ $\implies$ $ L(\phi,\eta) $ is an $X$-convex set.\\
	For the converse part, suppose for any $r, t \in M $ and $ {\eta}' =\mbox{max}\{\phi(r), \phi(t)\},$ then $r, t \in L(\phi, {\eta}'), $ and due to $X$-convexity of $ L(\phi, {\eta}'), $ we have  $ \delta (r-t) + g(t) \in L(\phi, {\eta}') ~\forall~ 0 \leq \delta \leq 1.$ Thus, we have	$$ \phi(\delta (r-t) + g(t)) \leq {\eta}'= \mbox{max}\{\phi(r) , \phi(t)\}. $$ 
	\end{proof}
\end{theorem}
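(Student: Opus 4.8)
The plan is to prove both directions of the equivalence by translating membership in a lower level set into an inequality on $\phi$, exactly as one does for ordinary quasiconvexity. Both the notion ``$\phi$ is quasi-$X$-convex'' and the notion ``$L(\phi,\eta)$ is $X$-convex'' are phrased through the same combination $\delta(r-t)+g(t)$ and the same kind of existential quantifier on the vector valued map $g$, so the argument reduces to matching up the two defining inequalities threshold by threshold.

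For the forward implication I would suppose $\phi$ is quasi-$X$-convex, fix $\eta\in R$, and take any $r,t\in L(\phi,\eta)$, so that $\phi(r)\le\eta$ and $\phi(t)\le\eta$, whence $\max\{\phi(r),\phi(t)\}\le\eta$. Invoking the definition of quasi-$X$-convexity produces a map $g$ with $\phi(\delta(r-t)+g(t))\le\max\{\phi(r),\phi(t)\}$ for all $\delta\in[0,1]$. Chaining this with $\max\{\phi(r),\phi(t)\}\le\eta$ gives $\phi(\delta(r-t)+g(t))\le\eta$, i.e. $\delta(r-t)+g(t)\in L(\phi,\eta)$, which is precisely the statement that $L(\phi,\eta)$ is an $X$-convex set.

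For the converse I would assume every lower level set is $X$-convex, and, given arbitrary $r,t\in M$, make the natural choice of threshold $\eta'=\max\{\phi(r),\phi(t)\}$, which guarantees $r,t\in L(\phi,\eta')$. Applying $X$-convexity of $L(\phi,\eta')$ yields a map $g$ with $\delta(r-t)+g(t)\in L(\phi,\eta')$ for every $\delta\in[0,1]$, and unwinding the definition of the level set gives $\phi(\delta(r-t)+g(t))\le\eta'=\max\{\phi(r),\phi(t)\}$, which is exactly the quasi-$X$-convexity of $\phi$.

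The only point requiring care is the bookkeeping of the existential map $g$. Because $g$ appears under an ``there exists'' in both definitions, and may in principle depend on the pair $(r,t)$, I would make sure that the witness $g$ obtained from one condition is reused verbatim to verify the other, rather than tacitly assuming a single global $g$. Since the target inequality and the defining inequality involve the identical expression $\delta(r-t)+g(t)$, this transfer of the witness is legitimate, and no continuity or further regularity of $\phi$ is needed; this quantifier bookkeeping is the main (and essentially the only) subtlety of the argument.
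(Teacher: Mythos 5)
Your proof is correct and follows essentially the same route as the paper: the forward direction chains the quasi-$X$-convexity inequality with $\max\{\phi(r),\phi(t)\}\le\eta$ to place $\delta(r-t)+g(t)$ in $L(\phi,\eta)$, and the converse uses the threshold $\eta'=\max\{\phi(r),\phi(t)\}$ and the $X$-convexity of $L(\phi,\eta')$, exactly as in the paper. Your explicit attention to reusing the existential witness $g$ is a point of care the paper leaves implicit, but it does not change the argument.
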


\begin{theorem}
	Suppose $M$ be an $X$-convex subset of $\mathbb{R}^n$ with condition $\|\delta (s-r) + g(r)- r\| < \nu$ for any $ s, r \in M$, $\delta \in [0, 1]$, where $ g : M \rightarrow \mathbb{R}^n $ is a vector valued map associated with $X$-convexity and $\nu>0$.
If  $ \phi : M \rightarrow R $ is a quasi-$X$-convex function, then any strict local minimum of $\phi$ over $M$ is also a strict global minimum.
Furthermore, the set of points where $M$ reaches its global minimum is an $X$-convex set.

\begin{proof}
   Suppose $r \in M $ attains local minimum of $\phi$  at a point $r$ of $M$. Then $\exists$ $\nu > 0 $ such that 
\begin{equation}
	\phi(r) < \phi(t),~ \forall t  \in M \cap B_{\nu}(r).
\end{equation}
Contrarily we suppose that $r$ is not a strict global minimum over $M$. Then $\exists$ $ r_{0} (\neq r) \in M $ such that $ \phi(r_{0}) \leq \phi(r).$ Due to quasi-$X$-convexity of $\phi$, we have for $ \delta \in\ [0,1],$ $\exists$ a vector valued function $g$ such that 	$$ \phi(\delta (r_{0}-r) + g(r)) \leq \phi(r).$$ This contradicts (4.6), since $\|\delta (r_{0}-r) + g(r)- r\| < \nu$ for each $ r_{0}, r \in M$, $\delta \in [0, 1]$ then, $\delta (r_{0}-r) + g(r) \in M \cap B _{\nu}(r)$ for any $\delta$.
\\ Now we suppose $\eta=\mbox{min}\{\phi(t) : t \in M\}$ be the global minimum value attained by $\phi$, $ L(\phi, \eta)$ is the collection of places where its global minimum on $M$ occurs, say $\eta$, that is,  the set	of global minimum points over $M$, and due to quasi-$X$-convexity of $\phi$, it is an $X$-convex set.	
	\end{proof}
\end{theorem}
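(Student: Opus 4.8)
The plan is to establish the two assertions separately. First I would prove that a strict local minimum is a strict global minimum by a contradiction argument resting on the (non-strict) inequality furnished by quasi-$X$-convexity together with the norm hypothesis on $g$. Then I would deduce that the minimizer set is $X$-convex as an immediate corollary of the characterization proved just above, namely that the lower level sets of a quasi-$X$-convex function are $X$-convex for every $\eta \in R$.

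For the first claim, suppose $r \in M$ is a strict local minimum, so there is $\nu > 0$ with $\phi(r) < \phi(t)$ for every $t \in M \cap B_\nu(r)$ with $t \neq r$. Arguing by contradiction, I would assume $r$ is not a strict global minimum and select $r_0 \in M$, $r_0 \neq r$, with $\phi(r_0) \leq \phi(r)$. Quasi-$X$-convexity, applied to the pair $(r_0, r)$ with the associated map $g$, gives for each $\delta \in [0,1]$
$$\phi\bigl(\delta(r_0 - r) + g(r)\bigr) \leq \max\{\phi(r_0),\, \phi(r)\} = \phi(r).$$
Setting $s = \delta(r_0 - r) + g(r)$, the standing assumption $\|\delta(r_0 - r) + g(r) - r\| < \nu$ forces $s \in M \cap B_\nu(r)$. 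Then $\phi(s) \leq \phi(r)$ stands in direct conflict with the strict local minimum inequality $\phi(r) < \phi(s)$, and this contradiction proves that $r$ is a strict global minimum.

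For the second claim, I would set $\eta = \min\{\phi(t) : t \in M\}$. Since $\eta$ is the least attained value, the set of global minimizers coincides exactly with the lower level set $L(\phi, \eta) = \{r \in M : \phi(r) \leq \eta\}$. As $\phi$ is quasi-$X$-convex, the characterization recalled above guarantees that $L(\phi, \eta)$ is $X$-convex for every $\eta$, and in particular at the minimum value; hence the minimizer set is $X$-convex, which completes the proof.

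The step I expect to be the main obstacle is securing a \emph{genuine} contradiction in the first claim. The inequality delivered by quasi-$X$-convexity is only non-strict, so the strictness of the local minimum is essential: one needs $\phi(s) \leq \phi(r)$ to collide with $\phi(r) < \phi(s)$, which in turn requires $s \neq r$. I would therefore have to rule out the degenerate case in which $s = \delta(r_0 - r) + g(r)$ equals $r$ for every admissible $\delta$, since in that situation no contradiction follows. Exploiting the freedom in $\delta$ together with $r_0 \neq r$, the family $s(\delta)$ cannot be identically equal to $r$ unless $g$ behaves pathologically, and the norm bound is precisely what keeps each $s(\delta)$ inside $B_\nu(r)$ while permitting it to differ from $r$. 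Apart from isolating this point, the remainder of the argument is routine and mirrors the treatment of the $X$-convex case established earlier.
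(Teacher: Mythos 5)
Your proposal follows essentially the same route as the paper's proof: the identical contradiction argument combining quasi-$X$-convexity with the norm hypothesis to place $\delta(r_0-r)+g(r)$ inside $M \cap B_\nu(r)$ for the first claim, and the same appeal to the lower-level-set characterization (lower level sets of a quasi-$X$-convex function are $X$-convex) for the second. The degenerate case $s(\delta)=r$ that you flag is in fact a subtlety the paper's own write-up glosses over (its local-minimum inequality does not even exclude $t=r$), and your fix is immediate to complete: since $r_0 \neq r$, the map $\delta \mapsto \delta(r_0-r)+g(r)$ is a non-constant affine function of $\delta$, so it can equal $r$ for at most one $\delta \in [0,1]$, and any other choice of $\delta$ yields the desired contradiction.
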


\begin{theorem}
Any strictly quasi-$X$-convex function  $ \phi : M \rightarrow R $  defined over a non-empty $X$-convex subset $M$ of $R^n$ attains its minimum  at the most one point of $M$.
\begin{proof}
 Suppose on contrary, that $\phi$ attains its minimum at two distinct points $u_1$ and $u_2 \in M.$ Then
 \begin{equation}
 	\phi(u_1) \leq \phi(t)~ \mbox{and}~ \phi(u_2) \leq \phi(t), ~\forall t \in M.
\end{equation}

For $t=u_2$ in the first inequality and $t=u_1$ in the second inequality, we get $\phi(u_1)=\phi(u_2)$. By strict quasi-$X$-convexity of $\phi$, we have $\forall$ $0 < \delta <1, $ $\exists$ a vector valued function $g$ such that $$  \phi(\delta (u_1-u_2) + g(u_2)) < \phi(u_1),$$ which contradicts (4.7).
	\end{proof}
\end{theorem}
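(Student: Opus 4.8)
The plan is to argue by contradiction, mirroring the structure used for the strictly $X$-convex case in Theorem 4.5 but invoking only the weaker strict quasi-$X$-convex inequality. Suppose $\phi$ attains its minimum over $M$ at two distinct points $u_1, u_2 \in M$. By the defining property of a minimizer, $\phi(u_1) \leq \phi(t)$ and $\phi(u_2) \leq \phi(t)$ hold for every $t \in M$.

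First I would pin down the values at the two minimizers. Substituting $t = u_2$ into the first inequality gives $\phi(u_1) \leq \phi(u_2)$, and substituting $t = u_1$ into the second gives $\phi(u_2) \leq \phi(u_1)$; hence $\phi(u_1) = \phi(u_2)$, and in particular $\max\{\phi(u_1), \phi(u_2)\} = \phi(u_1)$. This equality is what makes the two candidate minimizers interchangeable and is the only consequence of minimality needed before the convexity hypothesis enters.

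Next I would apply strict quasi-$X$-convexity. Since $u_1 \neq u_2$, the definition supplies, for each fixed $\delta \in (0,1)$, a vector-valued map $g : M \to R^n$ with
$$ \phi(\delta(u_1 - u_2) + g(u_2)) < \max\{\phi(u_1), \phi(u_2)\} = \phi(u_1). $$
This produces a point whose $\phi$-value lies strictly below the common minimum value $\phi(u_1)$, which is precisely the contradiction sought, provided that point is itself a member of $M$.

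The one step that needs care is exactly this membership: to collide with the minimality of $\phi(u_1)$ over $M$, the argument $w := \delta(u_1 - u_2) + g(u_2)$ must belong to $M$. In the present framework this is automatic, because $\phi$ is defined only on $M$, so the very act of evaluating $\phi$ at $w$ in the strict quasi-$X$-convexity inequality presupposes that the map $g$ realizes $w$ as a point of the $X$-convex set $M$; this is the role played by the $X$-convexity of $M$ in Definition 3.1. Granting $w \in M$, the minimality $\phi(u_1) \leq \phi(w)$ directly contradicts $\phi(w) < \phi(u_1)$, so no two distinct minimizers can exist and $\phi$ attains its minimum at most one point of $M$.
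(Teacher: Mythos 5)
Your proof is correct and follows essentially the same route as the paper's: assume two distinct minimizers, deduce $\phi(u_1)=\phi(u_2)$ by cross-substitution, then apply strict quasi-$X$-convexity to produce a point of $M$ with strictly smaller value, contradicting minimality. The only difference is that you explicitly justify why $\delta(u_1-u_2)+g(u_2)$ lies in $M$ (via the $X$-convexity of $M$ and the fact that $\phi$ is only defined on $M$), a point the paper's proof uses silently; this is a welcome clarification rather than a divergence.
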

\begin{theorem}
	Consider a quasi-$X$-convex function  $ \phi : M \rightarrow R $ defined on a non-empty $X$-convex subset $M$ of $R^n$. Suppose $I$ be an $X$-convex set in $R$ that contains $\phi(M)$. If $\theta : I \rightarrow R$ is non-decreasing convex function, then $ \theta \circ \phi $ is an $X$-convex function on $M$.
	\begin{proof}
 As $\phi$ is quasi-$X$-convex function on $M$, then for each $r, t \in M,$ $0\leq\delta \leq 1$ and $\exists$ a vector valued function  $ g : M \rightarrow R^n $ such that
$$ \phi(\delta (r-t) + g(t)) \leq \mbox{max}\{\phi(r), \phi(t)\},$$
and $$\theta \circ(\phi(\delta (r-t) + g(t)) \leq  \theta (\mbox{max}\{\phi(r), \phi(t)\}),$$
$$\theta \circ(\phi(\delta (r-t) + g(t)) \leq  \mbox{max}\{ \theta \circ \phi(r),  \theta \circ \phi(t)\}.$$
This implies that $\theta\circ \phi $ is a quasi-$X$-convex function on $M$.
\end{proof}
\end{theorem}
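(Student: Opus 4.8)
The plan is to follow the two-step template already used in the earlier composition theorem for $X$-convex $\phi$ (the one establishing that an increasing convex $\theta$ preserves $X$-convexity): first extract from the generalized convexity of $\phi$ a concrete image point at which $\phi$ is controlled, and then transport that control through $\theta$ by using, in turn, its monotonicity and its convexity, so as to land on the affine upper bound $\delta(\theta\circ\phi)(r)+(1-\delta)(\theta\circ\phi)(t)$ that the definition of $X$-convexity demands.

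First I would fix arbitrary $r,t\in M$ and $\delta\in[0,1]$ and invoke the quasi-$X$-convexity of $\phi$ to produce a vector-valued map $g:M\to\mathbb{R}^n$ together with the associated point $y:=\delta(r-t)+g(t)\in M$ satisfying $\phi(y)\le\max\{\phi(r),\phi(t)\}$. Since $I\supseteq\phi(M)$ contains $\phi(y),\phi(r),\phi(t)$, the composition $\theta\circ\phi$ is well defined at these points, so $\theta$ may legitimately be applied. Because $\theta$ is non-decreasing, applying it to the previous inequality and using the identity $\theta(\max\{a,b\})=\max\{\theta(a),\theta(b)\}$ (valid for a non-decreasing $\theta$) yields $(\theta\circ\phi)(y)\le\max\{(\theta\circ\phi)(r),(\theta\circ\phi)(t)\}$.

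The main obstacle, and the step where the full strength of the hypotheses must be exploited, is to upgrade this $\max$-control into the affine control required for $X$-convexity, namely $(\theta\circ\phi)(y)\le\delta(\theta\circ\phi)(r)+(1-\delta)(\theta\circ\phi)(t)$. This is exactly where the convexity of $\theta$, and not merely its monotonicity, has to enter. The route I would pursue is to use the freedom that the definition of $X$-convexity grants in choosing $g$: rather than settling for the map coming from quasi-$X$-convexity, I would try to select $g$ so that the image point $y$ in fact obeys the stronger affine estimate $\phi(y)\le\delta\phi(r)+(1-\delta)\phi(t)$. Once such a $y$ is secured, monotonicity gives $(\theta\circ\phi)(y)\le\theta(\delta\phi(r)+(1-\delta)\phi(t))$, and convexity of $\theta$ closes the chain via $\theta(\delta\phi(r)+(1-\delta)\phi(t))\le\delta\,\theta(\phi(r))+(1-\delta)\,\theta(\phi(t))$, precisely as in the earlier $X$-convex composition theorem.

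I expect the delicate point to be establishing this affine estimate at the chosen point $y$: quasi-$X$-convexity by itself supplies only the $\max$-bound, so producing a map $g$ that delivers the affine bound is the substantive content that separates the $X$-convex conclusion from the weaker quasi-$X$-convex one. Accordingly, I would spend the core of the argument verifying that, for the given $r,t,\delta$, a vector-valued map $g$ can indeed be chosen realizing $\phi(\delta(r-t)+g(t))\le\delta\phi(r)+(1-\delta)\phi(t)$; with that in hand the passage through $\theta$ is purely formal and yields the stated $X$-convexity of $\theta\circ\phi$ on $M$.
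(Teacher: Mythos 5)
Your first two steps --- producing $y=\delta(r-t)+g(t)$ with $\phi(y)\le\max\{\phi(r),\phi(t)\}$ and pushing this through the non-decreasing $\theta$ via $\theta(\max\{a,b\})=\max\{\theta(a),\theta(b)\}$ --- are exactly the paper's entire proof, which stops there and concludes only that $\theta\circ\phi$ is \emph{quasi}-$X$-convex; the paper's argument never invokes the convexity of $\theta$ at all. The phrase ``$X$-convex function'' in the theorem's conclusion is evidently a misprint (the final line of the paper's proof claims quasi-$X$-convexity), so the statement you set out to prove is stronger than what the paper actually establishes.

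The remainder of your proposal contains a genuine gap, and the step you defer to ``the core of the argument'' cannot be carried out. Choosing $g$ so that $\phi(\delta(r-t)+g(t))\le\delta\phi(r)+(1-\delta)\phi(t)$ for all $r,t\in M$ and $\delta\in[0,1]$ is, verbatim, the assertion that $\phi$ is $X$-convex; that is not a hypothesis, and it does not follow from quasi-$X$-convexity --- the paper itself exhibits a quasi-$X$-convex function that is not $X$-convex (the greatest-integer function $\phi(r)=\alpha+[r]$ on $(-\infty,-\tfrac{1}{50}]\cup[-\tfrac{1}{100},0]$ with $g(t)=t-\tfrac{1}{50}$). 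Moreover, taking $\theta$ to be the identity map, which is non-decreasing and convex, shows that the statement as literally written would force every quasi-$X$-convex function to be $X$-convex; that same example refutes this, so no correct proof of the literal statement can exist. The right resolution is to stop after your second step: that already establishes the intended (and only provable) conclusion, namely that $\theta\circ\phi$ is quasi-$X$-convex on $M$.
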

\begin{theorem}
		Suppose $M$ be an $X$-convex subset of $\mathbb{R}^n$ with condition $\|\delta (s-r) + g(r)- r\| < \nu$ for each $ s, r \in M$, $\delta \in [0, 1]$, where $ g : M \rightarrow \mathbb{R}^n $ is a vector valued map associated with $X$-convexity and $\nu>0$.
On a semi-strictly quasi-$X$-convex function $\phi$,  every local minimum is a global minimum of $\phi$ over $M$.
\begin{proof}
 
  As the proof of theorem (4.5), we can write similarly the proof of theorem (4.10).
  
\end{proof}
\end{theorem}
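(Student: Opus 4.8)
The plan is to mimic the contradiction argument used in the first half of Theorem 4.5, replacing the $X$-convexity inequality with the defining inequality of semi-strict quasi-$X$-convexity. The only structural difference is that semi-strict quasi-$X$-convexity supplies a strict drop only when the two endpoint values differ, so before invoking that inequality I must first confirm that the hypotheses force $\phi(s)\neq\phi(r)$.

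First I would fix a local minimizer $r\in M$ of $\phi$. By definition there is some $\nu>0$ with $\phi(r)\le\phi(t)$ for every $t\in M\cap B_\nu(r)$; I would take this $\nu$ to be the radius appearing in the hypothesis $\|\delta(s-r)+g(r)-r\|<\nu$, which is assumed to hold for the map $g$ associated with the $X$-convexity of $M$. Next, arguing by contradiction, suppose $r$ is not a global minimizer. Then there exists $s\neq r$ in $M$ with $\phi(s)<\phi(r)$; in particular $\phi(s)\neq\phi(r)$, which is exactly the trigger condition for semi-strict quasi-$X$-convexity. Hence for any $\delta\in(0,1)$ there is a vector-valued map $g$ with
$$\phi(\delta(s-r)+g(r))<\max\{\phi(s),\phi(r)\}=\phi(r).$$
The hypothesis on $M$ gives $\|\delta(s-r)+g(r)-r\|<\nu$, so the point $\delta(s-r)+g(r)$ lies in $M\cap B_\nu(r)$, and the displayed strict inequality contradicts the local-minimality estimate $\phi(r)\le\phi(\delta(s-r)+g(r))$. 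This contradiction forces $r$ to be a global minimizer, completing the argument.

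The main obstacle here is bookkeeping rather than genuine depth: one must be careful that it is the \emph{strict} inequality $\phi(s)<\phi(r)$ (and not a mere $\le$) that validates the appeal to semi-strict quasi-$X$-convexity, since that class guarantees a strict decrease only when the endpoint values are unequal. The rest of the reasoning is identical in spirit to Theorems 4.5 and 4.8—indeed, the stated proof simply refers the reader back to Theorem 4.5—so no new geometric input beyond the neighborhood-containment condition $\|\delta(s-r)+g(r)-r\|<\nu$ on $M$ is required.
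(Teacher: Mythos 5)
Your proposal is correct and is exactly what the paper intends: the paper's own ``proof'' is just the remark that one argues as in Theorem 4.5, and you have carried out that adaptation faithfully, including the one point that actually needs checking, namely that $\phi(s)<\phi(r)$ gives $\phi(s)\neq\phi(r)$ so the semi-strict quasi-$X$-convexity inequality may be invoked, after which the neighborhood condition $\|\delta(s-r)+g(r)-r\|<\nu$ yields the contradiction with local minimality just as in Theorem 4.5.
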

\section{Application of Quasi-$X$-Convex Type Functions}
In this part, we\textsc{\char13}ll see that how quasi-X-convex, semi-strictly quasi-X-convex, and strictly quasi-X-convex functions can be used to characterize the solution for distinct optimization problems. The motivation behind this work is the study of prequasi-invex functions by X.M. Yang et al, \cite{Yang} in which they have dealt with a minimization problem. Here we also follow the definitions and results they devolped in their work.

Consider the following multi-objective optimization problem.
\begin{equation}
	\min \phi(r)=(\phi_1(r),\phi_2(r),...,\phi_p(r))^T, ~~~~s.t.~ r \in M,
\end{equation}
where $ \phi : M \rightarrow \mathbb{R}^p$ is a vector-valued function and $ M$ be an $X$-convex subset of $ \mathbb{R}^n$  with respect to $ g : M \rightarrow \mathbb{R}^n$.

Let  $A$ and $A'$ and we define
$$ A= \{ \mu \in R^p~ |~ \mu= (\mu_1, \mu_2,..., \mu_p), \mu_i \geq0, i=1,2,...,p\},$$
$$ A'= \{ \mu \in R^p~ |~ \mu= (\mu_1, \mu_2,..., \mu_p), \mu_i >0, i=1,2,...,p\}.$$
\begin{definition}
	 
	Let $r \in M$ and it is known as a global efficient solution of problem (5.1) if there $\nexists$ any point $t \in M$ such that $$ \phi(t) \in \phi(r)-A \backslash\{0\}.$$
	
	Let $r \in M$ and it is known as a local efficient solution of problem (5.1) if there is a neighborhood $N(r)$ of $r$ such that there $\nexists$ any point $ t \in M \cap N(r)$ such that $$ \phi(t) \in \phi(r)-A \backslash\{0\}.$$ 
\end{definition}
\begin{definition}
 
Let $r \in M$ and it is known as a global weakly efficient solution of problem (5.1) if there$\nexists$ any point $t \in M$ such that $$ \phi(t) \in \phi(r)-A' .$$	

Let $r \in M$ and it is known as a local weakly efficient solution of problem (5.1) if there is a neighbourhood $N(r)$ of $r$ such that  $\nexists$ any point $ t \in M \cap N(r)$ such that $$ \phi(t) \in \phi(r)-A' .$$	
\end{definition}
\begin{theorem}
	Suppose $\phi_i(r),$ $ i=1,2,...,p$, be quasi-$X$-convex and semi-strictly quasi-$X$-convex function with respect to the same vector-valued function $g$. Any local efficient solution to problem (5.1) is thus a global efficient solution to a problem (5.1).
\end{theorem}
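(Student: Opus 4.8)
The plan is to argue by contradiction, paralleling the local-to-global minimization arguments of Theorems 4.5, 4.7 and 4.10 but carried out coordinatewise. Suppose $r \in M$ is a local efficient solution with associated neighbourhood $N(r)$, yet fails to be a global efficient solution. By Definition 5.1, the failure of global efficiency produces a point $t \in M$ with $\phi(t) \in \phi(r) - A \setminus \{0\}$; unwinding this membership means $\phi_i(t) \leq \phi_i(r)$ for every $i = 1, \ldots, p$, with a strict inequality $\phi_{i_0}(t) < \phi_{i_0}(r)$ for at least one index $i_0$ (otherwise $\phi(t)=\phi(r)$, contradicting that the difference lies in $A\setminus\{0\}$).

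First I would form the $X$-convex combination based at $r$ in the direction of $t$, namely $z_\delta := \delta(t - r) + g(r)$ for $\delta \in (0,1)$; $X$-convexity of $M$ guarantees $z_\delta \in M$. The two hypotheses on the $\phi_i$ then play complementary roles. For each of the $p$ indices, quasi-$X$-convexity with the common map $g$ gives $\phi_i(z_\delta) \leq \max\{\phi_i(t), \phi_i(r)\} = \phi_i(r)$, the equality coming from $\phi_i(t) \leq \phi_i(r)$. For the distinguished index $i_0$, where $\phi_{i_0}(t) \neq \phi_{i_0}(r)$, semi-strict quasi-$X$-convexity upgrades this to the strict bound $\phi_{i_0}(z_\delta) < \max\{\phi_{i_0}(t), \phi_{i_0}(r)\} = \phi_{i_0}(r)$. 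Collecting the $p$ inequalities shows $\phi(z_\delta) \in \phi(r) - A \setminus \{0\}$; this is exactly where sharing a single $g$ across all the $\phi_i$ is essential, since every coordinate estimate must be evaluated at the same argument $z_\delta$.

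Second I would locate $z_\delta$ inside $N(r)$ for $\delta$ small, which then contradicts local efficiency and closes the argument. Here I expect the main obstacle to lie: as $\delta \to 0^+$ the point $z_\delta = \delta(t-r)+g(r)$ tends to $g(r)$, not to $r$, so $z_\delta$ need not approach $r$ unless one imposes the same proximity control used in Theorems 4.5, 4.7 and 4.10, namely $\|\delta(t-r) + g(r) - r\| < \nu$ for all $\delta \in [0,1]$. I would therefore invoke precisely this condition, choosing $\nu$ so that $B_\nu(r) \subseteq N(r)$, which places $z_\delta \in M \cap B_\nu(r) \subseteq M \cap N(r)$ for every admissible $\delta$. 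With such a $z_\delta$ simultaneously satisfying $\phi(z_\delta) \in \phi(r) - A \setminus \{0\}$ and lying in $M \cap N(r)$, the defining property of a local efficient solution is violated, yielding the desired contradiction and establishing that $r$ must in fact be a global efficient solution of (5.1).
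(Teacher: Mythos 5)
Your proposal follows the same overall route as the paper's proof: argue by contradiction, unwind the failure of global efficiency into coordinatewise inequalities $\phi_i(t)\leq\phi_i(r)$ with strictness at some index, then apply quasi-$X$-convexity at every coordinate and semi-strict quasi-$X$-convexity at the strict coordinate (both with the shared map $g$) to produce a point whose image is dominated by $\phi(r)$. The difference lies in the final localization step, and there your version is more careful than the paper's own. The paper forms the point $\delta(r-s)+g(s)$, based at $g(s)$ where $s$ is the globally better point, and then simply asserts that its inequalities (5.4)--(5.5) contradict local efficiency; it never verifies that this point lies in the neighbourhood $N(r)$, and it need not --- even under the proximity condition of Theorems 4.5, 4.7 and 4.10, that choice of base point places the comparison point near $s$, not near $r$. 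Your choice $z_\delta=\delta(t-r)+g(r)$ is the one for which the condition $\|\delta(t-r)+g(r)-r\|<\nu$ actually controls the distance to $r$, and your explicit importation of that hypothesis (which is absent from the statement of Theorem 5.3 as printed) is exactly what is needed to make the contradiction legitimate; in that sense you have identified and repaired a genuine gap in the paper's argument rather than deviated from it. One residual wrinkle, which your write-up shares with the paper's Theorems 4.5 and 4.7, is that $\nu$ is fixed by the hypothesis while $N(r)$ comes from the definition of local efficiency, so ``choosing $\nu$ so that $B_\nu(r)\subseteq N(r)$'' tacitly requires either that the proximity condition hold for arbitrarily small $\nu$ or that the efficiency neighbourhood contain $B_\nu(r)$; this should be said explicitly, but it is a defect inherited from the paper's framework, not introduced by your proof.
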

	\begin{proof}
		Contrarily we suppose that $\exists$ an $r \in M$ is a local efficient solution of problem (5.1), but is not a global efficient solution of problem (5.1). Then, $\exists$  $s \in M$ such that 
		\begin{equation}
			 \phi_i(s)\leq \phi_i (r), ~~ 1 \leq i \leq p
		\end{equation}
	and for some $j$,
	\begin{equation}
		 \phi_j(s)< \phi_j(r).
	\end{equation}
From the quasi-$X$-convexity of $\phi_i(r)$; and using (5.2), we get
\begin{equation}
	\phi_i(\delta(r-t)+g(t))\leq \phi_i(r),  ~~ 1 \leq i \leq p, ~~\forall \delta \in[0, 1],
\end{equation}
and from the semi-strictly quasi-$X$-convexity of $\phi_j$ and using (5.3), we get
\begin{equation}
	\phi_j(\delta(r-t)+g(t))< \phi_j(r),  ~~  \forall \delta \in[0, 1];
\end{equation}
(5.4) and (5.5) demonstrate that r is not a local efficient  solution to the problem (5.1), a contradiction.
	\end{proof}
\begin{theorem}
	Suppose $\phi_1(r), \phi_2(r),..., \phi_p(r)$ be quasi-$X$-convex functions with respect to the same vector-valued function $g$ and, for some $k$, suppose $\phi_k(r)$ be a strictly quasi-$X$-convex function with respect to the same vector function $g$. Let us consider that there exists a $\mu=(\mu_1,\mu_2,...,\mu_p)\geq 0,$ with $\mu_k>0$, such that $ r\in M$ is a local  solution of $\min \mu^T \phi(r)$, such that $ r\in M$. Then, $r$ is also a global efficient solution of problem (5.1).
\end{theorem}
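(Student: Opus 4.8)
The plan is to argue by contradiction through the scalarised objective $\mu^{T}\phi$, in the same spirit as the efficiency argument of Theorem (5.3). First I would suppose that $r$ is a local solution of $\min\mu^{T}\phi$ yet fails to be a global efficient solution of (5.1). By the definition of a global efficient solution, this failure produces a point $s\in M$ with $\phi_{i}(s)\le\phi_{i}(r)$ for every $i=1,\dots,p$ and $\phi_{j}(s)<\phi_{j}(r)$ for at least one index $j$. The strict inequality forces $\phi(s)\ne\phi(r)$, hence $s\ne r$, which is exactly the hypothesis the strict condition on $\phi_{k}$ will need.

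Next I would substitute the point $z_{\delta}:=\delta(s-r)+g(r)$, which lies in $M$ for every $\delta\in[0,1]$ by $X$-convexity of $M$, into the convexity hypotheses. Applying quasi-$X$-convexity of each $\phi_{i}$ with first point $s$ and second point $r$, and noting that $\phi_{i}(s)\le\phi_{i}(r)$ gives $\max\{\phi_{i}(s),\phi_{i}(r)\}=\phi_{i}(r)$, I obtain $\phi_{i}(z_{\delta})\le\phi_{i}(r)$ for every $i$ and every $\delta\in[0,1]$. Because $s\ne r$ and $\phi_{k}(s)\le\phi_{k}(r)$, strict quasi-$X$-convexity of $\phi_{k}$ upgrades this to the strict inequality $\phi_{k}(z_{\delta})<\phi_{k}(r)$ for every $\delta\in(0,1)$.

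I would then pass to the weighted sum. Multiplying the $i$-th inequality by $\mu_{i}\ge0$ and adding, the positivity $\mu_{k}>0$ propagates the strict drop at the $k$-th coordinate to the full scalarisation:
\[
\mu^{T}\phi(z_{\delta})=\mu_{k}\phi_{k}(z_{\delta})+\sum_{i\ne k}\mu_{i}\phi_{i}(z_{\delta})<\mu_{k}\phi_{k}(r)+\sum_{i\ne k}\mu_{i}\phi_{i}(r)=\mu^{T}\phi(r),
\]
which holds for every $\delta\in(0,1)$.

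The decisive step, and the main obstacle, is converting this strict inequality into a contradiction with local optimality, since $z_{\delta}\to g(r)$ rather than $r$ as $\delta\to0^{+}$, so $z_{\delta}$ need not by itself approach $r$. Exactly as in Theorems (4.5), (4.7) and (4.10), I would invoke the regularity condition $\|\delta(s-r)+g(r)-r\|<\nu$ on the map $g$, which places $z_{\delta}\in M\cap B_{\nu}(r)$ for small $\delta$; this condition is the standing assumption under which every local-to-global result of the paper is proved and should be read into the present statement. Once $z_{\delta}\in M\cap B_{\nu}(r)$, the inequality $\mu^{T}\phi(z_{\delta})<\mu^{T}\phi(r)$ contradicts $r$ being a local minimiser of $\mu^{T}\phi$, and this contradiction establishes that $r$ must be a global efficient solution of (5.1).
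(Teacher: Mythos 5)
Your proof is correct and follows the same skeleton as the paper's: assume $r$ is not globally efficient, extract $s$ with $\phi_i(s)\le\phi_i(r)$ for all $i$ and a strict drop somewhere, apply quasi-$X$-convexity componentwise and strict quasi-$X$-convexity at the index $k$, and scalarise with $\mu$ to contradict local optimality of $r$ for $\mu^{T}\phi$. But you handle the decisive last step differently, and better. The paper applies the convexity inequalities to the point $\delta(r-t)+g(t)$, with $g$ evaluated at the comparison point $t$; under the paper's own regularity condition such points lie near $t$, not near $r$, so the closing sentence ``we find a contradiction'' is not actually justified there --- having points far from $r$ with smaller scalarised value contradicts global, not local, optimality. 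You instead form $z_{\delta}=\delta(s-r)+g(r)$, with $g$ evaluated at the local solution $r$, exactly as in the paper's Theorems (4.5), (4.7) and (4.10), and you explicitly invoke the condition $\|\delta(s-r)+g(r)-r\|<\nu$ to place $z_{\delta}\in M\cap B_{\nu}(r)$, which is what makes the contradiction with local optimality genuine. The price, which you state honestly, is that this regularity hypothesis is not written into the statement of the theorem; without it (or something equivalent) the theorem as stated does not follow from the displayed inequalities, so reading it in as a standing assumption is not merely cosmetic but necessary. In short: same strategy, but your version closes a gap the paper's own proof leaves open.
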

\begin{proof}
	Contrarily we suppose that $r$ is not a global efficient solution of problem (5.1); i.e., $ \exists$ some $t \in M$, $\phi(t)\neq\phi(r),$ such that $$ \phi_i(t)\leq \phi_i(r),~~ 1\leq i\leq p.$$ Then, for any $ \delta \in (0, 1)$, from the quasi-$X$-convexity of $\phi_i$, we get $$ \phi_i(\delta(r-t)+g(t))\leq \phi_i(r),~~ 1\leq i\leq p$$ and from the strict quasi-$X$-convexity of some $\phi_k$,  $$ \phi_k(\delta(r-t)+g(t))\leq \phi_k(r).$$ Hence, by  $\mu=(\mu_1,\mu_2,...,\mu_p)\geq 0,$ with $\mu_k>0$, $$ \sum_{i=1}^{p} \mu_i\phi_i(\delta(r-t)+g(t))<\sum_{i=1}^{p}\mu_i\phi_i(r),~~ \delta \in (0, 1).$$
	That is, $$  \mu^T\phi(\delta(r-t)+g(t))<\mu^T\phi(r),~~ \delta \in (0, 1).$$ We find a contradiction.
\end{proof}
\begin{theorem}
		Suppose $\phi_1(r), \phi_2(r),..., \phi_p(r)$ be quasi-$X$-convex functions with respect to the same vector-valued function $g$ and, for some $k$, suppose $\phi_k(r)$ be a strictly quasi-$X$-convex function with respect to the same vector function $g$.  Any local efficient solution to problem (5.1) is then also a global efficient solution to problem (5.1).
\end{theorem}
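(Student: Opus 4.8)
The plan is to argue by contradiction along the contrapositive scheme already established for Theorems 5.3 and 5.4. First I would assume that $r \in M$ is a local efficient solution of (5.1) that fails to be a global efficient solution. By Definition 5.1 the failure of global efficiency produces a point $s \in M$ with $\phi(s) \in \phi(r) - A\setminus\{0\}$, that is,
\[
\phi_i(s) \le \phi_i(r) \quad (1 \le i \le p), \qquad \phi_j(s) < \phi_j(r) \ \text{ for some index } j .
\]
Since $\phi(s) \ne \phi(r)$ we have $s \ne r$, which is precisely the condition that will later license the use of strict quasi-$X$-convexity.

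Next I would track the value of $\phi$ along the $X$-convex combination $z_\delta := \delta(r-s) + g(s)$, which lies in $M$ for every $\delta \in [0,1]$ by $X$-convexity of $M$. Applying quasi-$X$-convexity (Definition 3.6(a)) of each $\phi_i$ to the pair $(r,s)$ and using $\phi_i(s) \le \phi_i(r)$ gives, for all $\delta \in [0,1]$,
\[
\phi_i(z_\delta) \le \max\{\phi_i(r), \phi_i(s)\} = \phi_i(r), \qquad 1 \le i \le p .
\]
The decisive step, and the only place where the hypotheses here genuinely differ from those of Theorem 5.3, is obtaining a strict inequality in at least one coordinate. In Theorem 5.3 the strict inequality is forced on the index $j$ via semi-strict quasi-$X$-convexity together with $\phi_j(s) \ne \phi_j(r)$; here, by contrast, only the single function $\phi_k$ is strictly quasi-$X$-convex, and $k$ need not coincide with $j$. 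The resolution is that strict quasi-$X$-convexity (Definition 3.6(b)) is \emph{unconditional}: it requires merely $s \ne r$ and $0 < \delta < 1$, not $\phi_k(s) < \phi_k(r)$. Hence for every $\delta \in (0,1)$,
\[
\phi_k(z_\delta) < \max\{\phi_k(r), \phi_k(s)\} = \phi_k(r),
\]
so the strict decrease automatically lands on coordinate $k$, irrespective of whether $k = j$.

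Combining the two displays, for each $\delta \in (0,1)$ the point $z_\delta \in M$ satisfies $\phi_i(z_\delta) \le \phi_i(r)$ for all $i$ with strict inequality in the $k$-th coordinate, i.e. $\phi(z_\delta) \in \phi(r) - A\setminus\{0\}$. To complete the contradiction I would then select $\delta > 0$ small enough that $z_\delta$ falls inside the neighbourhood $N(r)$ appearing in the definition of a local efficient solution, exhibiting a dominating point in $M \cap N(r)$ and contradicting local efficiency of $r$. I expect this last, locality, step to be the main obstacle: since $z_\delta \to g(s)$ rather than to $r$ as $\delta \to 0$, forcing $z_\delta$ into $N(r)$ is not automatic and rests on a proximity hypothesis on $g$ of the form $\|\delta(r-s)+g(s)-r\| < \nu$, exactly as imposed in the local-versus-global theorems of Section 4 (Theorems 4.5, 4.7, 4.10). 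Under that hypothesis the argument closes immediately; absent it, the dominating point $z_\delta$ cannot be guaranteed to enter $N(r)$, so I would either import the proximity condition explicitly or restrict attention to maps $g$ for which $g(s)$ can be taken arbitrarily near $r$.
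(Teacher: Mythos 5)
Your proposal is correct and is essentially the argument the paper intends: the paper gives no proof of this theorem (it is left to follow ``on similar lines'' as its neighbours), and your argument reproduces the proof of Theorem 5.3 with the one genuinely new point handled correctly, namely that strict quasi-$X$-convexity of $\phi_k$ requires only $s \neq r$ and $0<\delta<1$, so the strict inequality lands on coordinate $k$ regardless of whether $k$ coincides with the index $j$ at which domination is strict. Your closing caveat is also justified: since $\delta(r-s)+g(s)$ tends to $g(s)$ rather than to $r$ as $\delta \to 0$, contradicting \emph{local} efficiency genuinely requires a proximity hypothesis of the type $\|\delta(r-s)+g(s)-r\|<\nu$ imposed in Theorems 4.5, 4.7 and 4.10; the paper's own proof of Theorem 5.3 passes over this step silently, so the gap you flag is inherited from the paper rather than introduced by your argument, and your proposed fix (importing that hypothesis) is the right repair.
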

On similiar lines the following two theorems can also be proved.
\begin{theorem}
	Suppose $\phi_1(r), \phi_2(r),..., \phi_p(r)$ be semi-strictly quasi-$X$-convex with respect to the same vector-valued function $g$. Then, every local weakly  efficient solution of problem (5.1) is a global weakly efficient solution of problem (5.1).	
\end{theorem}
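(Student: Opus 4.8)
The plan is to argue by contradiction along the same lines as Theorem 5.3, but now exploiting semi-strict quasi-$X$-convexity of \emph{every} component $\phi_i$ rather than a mixture of quasi- and semi-strict convexity. Suppose $r \in M$ is a local weakly efficient solution of (5.1) but is not a global weakly efficient solution. By Definition 5.2, the failure of global weak efficiency yields a point $s \in M$ with $\phi(s) \in \phi(r) - A'$, that is, $\phi_i(s) < \phi_i(r)$ for every $i \in \{1, \dots, p\}$.

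First I would convert these strict inequalities, which are known only at $s$, into strict inequalities at a point that can be forced into every neighbourhood of $r$. Since $\phi_i(s) < \phi_i(r)$ in particular gives $\phi_i(s) \neq \phi_i(r)$, semi-strict quasi-$X$-convexity of each $\phi_i$ with the common map $g$ yields, for every $\delta \in (0,1)$,
\[
\phi_i\big(\delta(s-r) + g(r)\big) < \max\{\phi_i(s),\, \phi_i(r)\} = \phi_i(r), \qquad 1 \leq i \leq p.
\]
Writing $z_\delta = \delta(s-r) + g(r)$, this is exactly $\phi(z_\delta) \in \phi(r) - A'$ for each $\delta \in (0,1)$, so $z_\delta$ strictly improves $r$ in all $p$ objectives simultaneously. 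It is here that the hypothesis of a \emph{single} shared $g$ is essential: it guarantees that all $p$ inequalities hold at one and the same point $z_\delta$.

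To close the argument I would choose $\delta$ small enough that $z_\delta$ lies in $M \cap N(r)$, where $N(r)$ is the neighbourhood supplied by local weak efficiency. The existence of such a $\delta$ is precisely the content of the proximity hypothesis $\|\delta(s-r) + g(r) - r\| < \nu$ invoked in Theorems 4.5, 4.7 and 4.10, applied with $\nu$ taken so that $B_\nu(r) \subseteq N(r)$. Once $z_\delta \in M \cap N(r)$, the displayed inequalities exhibit a point of $M \cap N(r)$ with $\phi(z_\delta) \in \phi(r) - A'$, contradicting the local weak efficiency of $r$ and completing the proof.

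I expect this proximity step to be the only genuine obstacle. The strict inequalities fall out of semi-strict quasi-$X$-convexity with no effort, and the whole argument is componentwise, so the analytic core is identical in spirit to Theorem 5.3. The subtlety is that the statement as written does not restate the condition controlling $\|z_\delta - r\|$, and since $z_\delta \to g(r)$ rather than $r$ as $\delta \to 0^+$, I would either carry the proximity hypothesis of Theorem 4.5 over explicitly or assume $g$ and $M$ are such that $z_\delta$ can be driven into $N(r)$; this is exactly what allows the claim to be proved ``on similar lines.''
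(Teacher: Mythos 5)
Your proposal is correct and takes essentially the same route the paper intends: the paper gives no explicit proof of this theorem, saying only that it follows ``on similar lines'' to its Theorem 5.3, whose contradiction argument (produce $s\in M$ improving the objectives, then use the generalized convexity with the shared map $g$ to exhibit an improving point that must lie in the neighbourhood certifying local efficiency) is exactly what you carry out componentwise using semi-strict quasi-$X$-convexity. If anything, you are more careful than the paper's template proof, which silently assumes the constructed point lands in $M\cap N(r)$, whereas you correctly flag that since $\delta(s-r)+g(r)\to g(r)$ rather than $r$ as $\delta\to 0^{+}$, the proximity hypothesis $\|\delta(s-r)+g(r)-r\|<\nu$ of Theorems 4.5, 4.7 and 4.10 must be imported for the contradiction to close.
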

\begin{theorem}
	Suppose $\phi_1(r), \phi_2(r),..., \phi_p(r)$ be semi-strictly quasi-$X$-convex functions with respect to the same vector-valued function $g$.  suppose there exists  $\mu=(\mu_1,\mu_2,...,\mu_p)\geq 0,$ with $\mu_k>0$, such that $ r\in M$ is a local  solution of $\min \mu^T \phi(r)$, such that $ r\in M$. Then, $r$ is also a global weakly efficient solution of problem (5.1).	
\end{theorem}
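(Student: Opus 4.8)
The plan is to argue by contradiction, transporting the computation of Theorem 5.4 to the weakly-efficient setting and replacing the quasi-/strict-quasi mixture used there by semi-strict quasi-$X$-convexity (Definition 3.6(c)) applied to \emph{every} component. So assume that $r \in M$ is a local solution of $\min \mu^T\phi$ for the given $\mu = (\mu_1,\dots,\mu_p) \geq 0$ with $\mu_k > 0$, and suppose, for contradiction, that $r$ is \emph{not} a global weakly efficient solution of (5.1). By Definition 5.2 this produces a point $t \in M$ with $\phi(t) \in \phi(r) - A'$; unwinding the definition of $A'$, this means
\[
\phi_i(t) < \phi_i(r) \qquad \text{for every } i = 1,\dots,p .
\]

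The decisive observation is that each of these inequalities is \emph{strict}, so $\phi_i(r) \neq \phi_i(t)$ for every $i$, which is exactly the premise required to invoke semi-strict quasi-$X$-convexity of each component. Using the common map $g$, I would then obtain, for every $\delta \in (0,1)$ and every $i$,
\[
\phi_i\big(\delta(r-t)+g(t)\big) < \max\{\phi_i(r),\phi_i(t)\} = \phi_i(r),
\]
where the equality holds precisely because $\phi_i(t) < \phi_i(r)$.

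Next I would weight the $i$-th inequality by $\mu_i \geq 0$ and sum over $i$. Since $\mu_k > 0$ and the $k$-th inequality is strict, the weighted sum stays strict, giving $\mu^T\phi(\delta(r-t)+g(t)) < \mu^T\phi(r)$ for every $\delta \in (0,1)$. This is meant to contradict the local optimality of $r$ for $\min \mu^T\phi$, yielding the claim.

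The step I expect to be the genuine obstacle is the last one: to read off a contradiction with \emph{local} optimality, one must know that the constructed points $\delta(r-t)+g(t)$ actually enter the neighbourhood of $r$ on which $r$ minimizes $\mu^T\phi$. This is precisely the role played, in Theorems 4.5, 4.7 and 4.10, by the metric restriction $\|\delta(s-r)+g(r)-r\| < \nu$ on the map $g$; I would therefore impose that condition (so that $\delta(r-t)+g(t) \in M \cap B_\nu(r)$ for all sufficiently small $\delta$) or, equivalently, let $\delta \to 0^+$ and control the base point through $g$. Once this neighbourhood issue is settled, the remainder is the routine transfer of the Theorem 5.4 argument, and the companion Theorem 5.6 follows in the same manner from the definition of a global weakly efficient solution together with semi-strict quasi-$X$-convexity of all the $\phi_i$.
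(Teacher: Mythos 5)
Your proposal is correct and coincides with the paper's intended argument: the paper gives no proof of this theorem, stating only that it follows ``on similar lines'' to Theorems 5.3--5.4, and your adaptation (extracting the strict componentwise inequalities from Definition 5.2, invoking semi-strict quasi-$X$-convexity of every $\phi_i$, and taking the $\mu$-weighted sum, with strictness preserved by $\mu_k>0$) is exactly that adaptation of the Theorem 5.4 proof. You are in fact more careful than the paper: the neighbourhood issue you flag at the end --- guaranteeing that the points $\delta(r-t)+g(t)$ actually enter the ball on which $r$ is locally optimal --- is silently glossed over in the paper's own proof of Theorem 5.4, which asserts a contradiction without imposing any condition tying $g(t)$ to $r$.
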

When $p = 1$, the multi-objective mathematical programming problem (5.1) becomes a single-objective mathematical programming problem,
\begin{equation}
	\min \phi(r) ~~~~s.t.~ r \in M,
\end{equation}
where $ \phi : M \rightarrow \mathbb{R}$ is a real-valued function and $ M$ is an $X$-convex subset of $\mathbb{R}^n$  with respect to $ g : M \rightarrow \mathbb{R}^n$.
\begin{theorem}
	Let $\phi$ be strictly quasi-$X$-convex with respect to the vector valued function $ g : M \rightarrow \mathbb{R}^n$. Then, the solution of problem (5.6) is unique.
\end{theorem}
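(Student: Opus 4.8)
The plan is to argue by contradiction, reproducing in the single-objective setting the reasoning already used for Theorem 4.8. A solution of problem (5.6) is by definition a global minimizer of $\phi$ over $M$, so the assertion is precisely that such a minimizer is unique. First I would suppose that (5.6) admits two distinct solutions $r_1, r_2 \in M$ with $r_1 \neq r_2$. Since each is a global minimizer, we have $\phi(r_1) \leq \phi(t)$ and $\phi(r_2) \leq \phi(t)$ for every $t \in M$; specializing $t = r_2$ in the first inequality and $t = r_1$ in the second forces $\phi(r_1) = \phi(r_2)$.

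Next I would invoke the strict quasi-$X$-convexity of $\phi$. Because $r_1 \neq r_2$, there is, for each $\delta \in (0,1)$, a vector-valued map $g : M \to \mathbb{R}^n$ with
$$ \phi(\delta(r_1 - r_2) + g(r_2)) < \max\{\phi(r_1),\, \phi(r_2)\} = \phi(r_1). $$
Since $M$ is $X$-convex, the point $\delta(r_1 - r_2) + g(r_2)$ lies in $M$, so this produces a feasible point at which $\phi$ attains a value strictly smaller than the optimal value $\phi(r_1)$. This contradicts the optimality of $r_1$, and hence the two solutions cannot be distinct; the solution of (5.6) is unique.

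The argument is short, and the only step deserving attention is the membership $\delta(r_1 - r_2) + g(r_2) \in M$: this is exactly the content of the $X$-convexity of $M$ (Definition 3.1), and it is what converts the strict inequality into a genuine feasible competitor to $r_1$. I anticipate no real obstacle, since the statement is nothing more than the single-objective specialization of Theorem 4.8; indeed one could alternatively dispatch it in a single line by observing that a solution of (5.6) is a global minimizer of $\phi$ and appealing to Theorem 4.8 directly.
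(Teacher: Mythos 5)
Your proof is correct and takes essentially the same route as the paper's: assume two distinct solutions of (5.6), deduce their values coincide, and invoke strict quasi-$X$-convexity to exhibit a feasible point $\delta(r_1-r_2)+g(r_2)\in M$ with strictly smaller value, contradicting optimality. In fact your write-up is slightly more careful than the paper's, which (by an apparent typo) states the non-strict inequality $\phi(\delta(r-t)+g(t))\leq \phi(r)$ where the strict inequality $<$ is what actually yields the contradiction.
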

\begin{proof}
	Let $r$ be a solution of problem (5.1). Contrarily we suppose that $r$ is not the unique solution. Then, $\exists$ $t \in M$ such that $r\neq t$ and $ \phi(r)=\phi(t)$. Since $\phi$ is a strictly quasi-$X$-convex function, we have $$ \phi(\delta(r-t)+g(t))\leq \phi(r), ~~\forall ~\delta \in (0, 1)$$ which implies that $r$ is not a solution of problem (5.6), a contradiction.
	
\end{proof}
\begin{theorem}
	Let $\phi$ be quasi-$X$-convex with respect to the vector-valued function $g$. Then the solution set of problem (5.6) is $X$-convex with respect to $g$.
\end{theorem}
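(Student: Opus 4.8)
The plan is to recognize the solution set of (5.6) as a single lower level set of $\phi$ and then appeal to the lower-level-set characterization already established for quasi-$X$-convex functions. First I would set $\eta = \min\{\phi(t) : t \in M\}$ (working under the standing assumption that this minimum is attained, so that the solution set is non-empty, as the definition of an $X$-convex set demands), and write the solution set as $S = \{r \in M : \phi(r) \le \phi(t) \text{ for all } t \in M\}$. The key observation is that $S$ coincides exactly with the lower level set $L(\phi, \eta)$: every global minimizer has value $\eta \le \eta$ and so lies in $L(\phi, \eta)$, while conversely any $r$ with $\phi(r) \le \eta$ must satisfy $\phi(r) = \eta$ because $\eta$ is the least value of $\phi$ on $M$, hence is itself a global minimizer.

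Once this identification $S = L(\phi, \eta)$ is in place, the result follows immediately from Theorem (4.6): since $\phi$ is quasi-$X$-convex with respect to $g$, each of its lower level sets is $X$-convex with respect to $g$, and in particular $S$ is $X$-convex with respect to $g$, which is precisely the assertion. If one prefers a self-contained argument, I would instead reproduce the closing step of Theorem (4.7) directly: pick $r_1, r_2 \in S$ and $\delta \in [0,1]$, note that $\phi(r_1) = \phi(r_2) = \eta$, and apply quasi-$X$-convexity to obtain $\phi(\delta(r_1 - r_2) + g(r_2)) \le \max\{\phi(r_1), \phi(r_2)\} = \eta$; since $\eta$ is the global minimum value and $\delta(r_1 - r_2) + g(r_2) \in M$, the reverse inequality also holds, forcing equality, so the point is itself a minimizer and lies in $S$.

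The only point requiring care is that $\delta(r_1 - r_2) + g(r_2)$ must actually belong to $M$ for $\phi$ to be evaluated there and compared with $\eta$; this is exactly guaranteed by the hypothesis that $M$ is $X$-convex with respect to the same map $g$. Beyond this, the argument is entirely routine, so I expect no substantive obstacle: the content of the theorem is essentially a reformulation, in the language of the single-objective problem (5.6), of the global-minimum-set statement already contained in Theorem (4.7).
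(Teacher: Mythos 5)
Your proposal is correct and matches the paper's own argument: the paper sets $\beta=\inf_{r\in M}\phi(r)$, takes $G=\{r\in M:\phi(r)=\beta\}$, and for $r_1,r_2\in G$ applies quasi-$X$-convexity to get $\phi(\delta(r_1-r_2)+g(r_2))\leq\max\{\phi(r_1),\phi(r_2)\}=\beta$, concluding $\delta(r_1-r_2)+g(r_2)\in G$ --- exactly your self-contained second argument (your first route, via the lower-level-set characterization of Theorem 4.6, is the same computation packaged as a lemma). If anything, you are slightly more careful than the paper, which writes $\inf$ without addressing attainment and leaves implicit both the reverse inequality $\phi\geq\beta$ and the membership $\delta(r_1-r_2)+g(r_2)\in M$.
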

\begin{proof}
	Let
	$$ \beta=\inf_{r \in M} \phi(r) ~~ \mbox{and}~~ G= \{r\in M : \phi(r)=\beta\}.$$
	Now for any $r_1, r_2 \in G$, by definition of quasi-$X$-convexity of $\phi$ on $M$ with respect to $g$, we have $$ \phi(\delta(r_1-r_2)+g(r_2))\leq \max \{\phi(r_1), \phi (r_2)\}=\beta, \delta\in [0, 1],$$
	$\implies$ $\delta(r_1-r_2)+g(r_2) \in G,~~ 0\leq \delta \leq 1. $\\
	Therefore, the solution set $G$ of problem (5.6) is $X$-convex with respect to $g$.
\end{proof}

\end{document}